\setlist{nolistsep}
\definecolor{purple}{RGB}{138,43,226}
\tikzset{snake it/.style={decorate, decoration=snake}}
\newlist{statement}{enumerate}{1}
\setlist[statement]{label=(\roman*),ref=\doublelabel{(\roman*)},before=\setcrefdoublealias, nolistsep}
\crefname{statementi}{statement}{statements} %% arguments are: name of counter, singular name, plural name
\newcommand*\doublelabel[1]{\protect\@twolabels{#1}{\@currentlabel~#1}}
\let\@twolabels\@firstoftwo
\def\setcrefdoublealias{%
	\begingroup\edef\x{\endgroup%
		\noexpand\crefalias{statementi}{%
			\noexpand\protect\noexpand\@twolabels%
			{statementi}{\expandafter\@extractcounterfromcreflabel\cref@currentlabel\end@extractcounterfromcreflabel}%
		}%
	}\x%
}
\def\@extractcounterfromcreflabel[#1]#2\end@extractcounterfromcreflabel{#1}
\newcommand*\versionWithTheorem[1]{\@ifstar{\versionWithTheorem@aux{#1*}}{\versionWithTheorem@aux{#1}}}
\newcommand*\versionWithTheorem@aux[2]{\begingroup\let\@twolabels\@secondoftwo#1{#2}\endgroup}
\DeclareRobustCommand*\crefWithTheorem{\versionWithTheorem\cref}
\DeclareFontFamily{U}{FdSymbolA}{}
\DeclareFontShape{U}{FdSymbolA}{m}{n}{
	<-> s * [1.5] FdSymbolA-Book
}{}
\DeclareFontShape{U}{FdSymbolA}{m}{b}{
	<-> s * [1] FdSymbolA-Medium
}{}
\DeclareSymbolFont{fdsymbols}{U}{FdSymbolA}{m}{n}
\DeclareMathSymbol{\upY}{\mathbin}{fdsymbols}{45}
\DeclareMathSymbol{\downY}{\mathbin}{fdsymbols}{47}
\DeclareMathSymbol{\hourglass}{\mathbin}{fdsymbols}{43}
\mathchardef\mhyphen="2D
\definecolor{Green}{RGB}{34, 139, 34}
\definecolor{DarkGray}{RGB}{136,136,136}
\definecolor{LightGray}{RGB}{192,192,192}
\newtheorem{theorem}{Theorem}[section]
\newtheorem{lemma}[theorem]{Lemma}
\newtheorem{definition}[theorem]{Definition}
\newtheorem{claim}{Claim}
\numberwithin{figure}{section}
\newtheorem{observation}[theorem]{Observation}
\crefname{observation}{Observation}{Observations}
\crefname{claim}{Claim}{Claims}
\newtheoremstyle{TheoremNum}
{\topsep}{\topsep}              %%% space between body and thm
{\itshape}                      %%% Thm body font
{}                              %%% Indent amount (empty = no indent)
{\bfseries}                     %%% Thm head font
{.}                             %%% Punctuation after thm head
{ }                             %%% Space after thm head
{\thmname{#1}\thmnote{ \bfseries #3}}%%% Thm head spec
\theoremstyle{TheoremNum}
\newcommand{\TFinf}{\mathscr{F}_{\infty}}
\newcommand{\Loneinf}{\mathscr{L}_{\infty}}
\newcommand{\Ltwoinf}{\mathscr{L}^{\Delta}_{\infty}}
\newcommand{\Lthreeinf}{\mathscr{L}^{\nabla\Delta}_{\infty}}
\newcommand{\N}{\mathbb{N}}
\newcommand{\eq}{=}
\definecolor{Green}{RGB}{34, 139, 34}
\journal{Discrete Mathematics}
\begin{document}
\begin{frontmatter}
\title{Unavoidable Induced Subgraphs of Infinite 2-Connected Graphs}

\author[label1]{Sarah Allred\corref{cor1}%
}
\ead{sarahallred@southalabama.edu}
\affiliation[label1]{organization={Department of Mathematics and Statistics,  University of South Alabama},%Department and Organization
            addressline={}, 
            city={Mobile},
            postcode={36688}, 
            state={AL},
            country={USA}}

\author[label2]{Guoli Ding}
\ead{ding@math.lsu.edu}
\affiliation[label2]{organization={Department of Mathematics, Louisiana State University},%Department and Organization
            addressline={}, 
            city={Baton Rouge},
            postcode={70803}, 
            state={LA},
            country={USA}}

\author[label2]{Bogdan Oporowski}
\ead{bogdan@math.lsu.edu}

\cortext[cor1]{Corresponding author}

\begin{abstract} In 1930, Ramsey proved that every infinite graph contains either an infinite clique or an infinite independent set. K\"{o}nig proved that every connected infinite graph contains either a ray or a vertex of infinite degree.  In this paper, we establish the 2-connected analog of these results. \end{abstract}

\begin{keyword}
Ramsey theory, 2-Connected graphs, Infinite graphs
\MSC[2020]{05C75, 05C63, 05C55}
\end{keyword}
\end{frontmatter}

\section{Introduction}\label{sec:intro}
The terms and symbols that are not defined explicitly in this paper will be understood as defined in \cite{west}.

Let us begin with the classical Ramsey Theorem, \cite{ramsey}.

\begin{theorem}
\label{thm:ramsey}
For every positive integer $r$, there is an integer $f_{\ref{thm:ramsey}}(r)$
such that every graph on at least $f_{\ref{thm:ramsey}}(r)$ vertices contains as an induced subgraph either $K_r$ or its complement $\overline K_r$.
\end{theorem}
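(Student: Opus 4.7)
The plan is to prove a stronger two-parameter version and then specialize. For positive integers $r$ and $s$, let $R(r,s)$ denote an integer such that every graph on at least $R(r,s)$ vertices contains either an induced $K_r$ or an induced $\overline{K_s}$. I would show by induction on $r+s$ that such an $R(r,s)$ exists for all $r,s\ge 1$, and then set $f_{\ref{thm:ramsey}}(r):=R(r,r)$.

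The base cases $r=1$ and $s=1$ are immediate with $R(1,s)=R(r,1)=1$, since a single vertex is simultaneously a $K_1$ and an $\overline{K_1}$. For the inductive step I aim to prove the recurrence
\[
R(r,s)\;\le\;R(r-1,s)+R(r,s-1),
\]
which together with the inductive hypothesis forces existence of $R(r,s)$.

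To establish the recurrence, let $G$ be any graph on $N=R(r-1,s)+R(r,s-1)$ vertices and fix an arbitrary vertex $v$. Partition $V(G)\setminus\{v\}$ into the set $A$ of neighbors of $v$ and the set $B$ of non-neighbors of $v$. Since $|A|+|B|=N-1$, pigeonhole forces either $|A|\ge R(r-1,s)$ or $|B|\ge R(r,s-1)$. In the first case, by the induction hypothesis $G[A]$ contains either an induced $K_{r-1}$, which together with $v$ yields an induced $K_r$ in $G$, or an induced $\overline{K_s}$, which already sits in $G$. The second case is symmetric: $G[B]$ contains either an induced $K_r$ or an induced $\overline{K_{s-1}}$, the latter of which extends through $v$ to an induced $\overline{K_s}$.

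The argument is classical and presents no real obstacle. The only conceptual point worth flagging is the need to strengthen the statement to a two-parameter formulation before inducting; an induction on $r$ alone would lack the recursive leverage obtained by tracking the clique and independent-set sides independently.
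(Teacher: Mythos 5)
Your argument is correct: it is the classical Erd\H{o}s--Szekeres proof of Ramsey's theorem via the two-parameter quantity $R(r,s)$ and the recurrence $R(r,s)\le R(r-1,s)+R(r,s-1)$, with sound base cases, a valid pigeonhole step (if both $|A|<R(r-1,s)$ and $|B|<R(r,s-1)$ then $|A|+|B|\le N-2$, a contradiction), correct extensions through $v$ on each side, and the specialization $f_{\ref{thm:ramsey}}(r)=R(r,r)$ (so for instance $\binom{2r-2}{r-1}$ suffices). The paper gives no proof of this statement---it simply cites Ramsey's original result---so there is nothing to compare against beyond noting that your proof is the standard one.
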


One analog of this theorem is the following result for  connected graphs, instead of arbitrary graphs.

\begin{theorem}
\label{thm:doov}
For every positive integer $r$, there is an integer $f_{\ref{thm:doov}}(r)$
such that every connected graph on at least $f_{\ref{thm:doov}}(r)$ vertices
contains an induced $K_r$, $K_{1,r}$, or $P_r$.
\end{theorem}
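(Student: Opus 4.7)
The plan is to combine breadth-first search with Ramsey's theorem (Theorem \ref{thm:ramsey}). Let $G$ be a connected graph on sufficiently many vertices, fix an arbitrary root $v \in V(G)$, and consider the BFS layers $L_0, L_1, L_2, \ldots$ at distance $0, 1, 2, \ldots$ from $v$. I split on the eccentricity of $v$: either some layer $L_i$ with $i \ge r-1$ is nonempty, or every vertex lies in $L_0 \cup L_1 \cup \cdots \cup L_{r-2}$.

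In the first case, pick any $u \in L_i$ and take a shortest $v$-$u$ path $P$. Then $P$ has at least $r$ vertices, and minimality forces $P$ to be induced: a chord between two vertices of $P$ would yield a strictly shorter $v$-$u$ path. Hence $G$ contains an induced $P_r$.

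In the second case, set $R := f_{\ref{thm:ramsey}}(r)$ and apply Ramsey at a high-degree vertex. If every vertex of $G$ had degree at most $R-1$, then $|L_{i+1}| \le (R-1)|L_i|$ for each $i$, giving $|V(G)| \le 1 + (R-1) + (R-1)^2 + \cdots + (R-1)^{r-2}$. Choosing $f_{\ref{thm:doov}}(r)$ to exceed this finite quantity, we may assume that some vertex $w$ has at least $R$ neighbors. Applying Theorem \ref{thm:ramsey} to the induced subgraph on $N(w)$, we get either an induced $K_r$, which is also an induced $K_r$ in $G$, or an independent set $I$ of size $r$, in which case $\{w\} \cup I$ induces a $K_{1,r}$ in $G$.

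The structural argument is a clean two-case dichotomy, so the only genuine obstacle is bookkeeping the bound on $f_{\ref{thm:doov}}(r)$, which is iterated-exponential in $r$ because $f_{\ref{thm:ramsey}}(r)$ itself grows exponentially and is then raised to the power $r-2$ through the BFS layer bound. A minor optimization would be to apply the Ramsey bound $R(r-1,r)$ rather than $R(r,r)$ to $N(w)$, since $w$ itself contributes one vertex of any resulting clique, but this does not change the asymptotic shape of the bound.
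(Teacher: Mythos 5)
Your argument is correct: shortest paths are induced (any chord skipping at least one vertex would shorten the path), so a vertex at distance at least $r-1$ from the root yields an induced $P_r$; otherwise the BFS tree has depth at most $r-2$, so once $|V(G)|$ exceeds $1+(R-1)+\cdots+(R-1)^{r-2}$ with $R=f_{\ref{thm:ramsey}}(r)$ some vertex $w$ has $|N(w)|\ge R$, and applying \cref{thm:ramsey} inside $N(w)$ gives either an induced $K_r$ or an independent set $I$ of size $r$ with $G[\{w\}\cup I]\cong K_{1,r}$. Note that the paper does not prove \cref{thm:doov} at all — it is quoted from the cited reference — so there is no internal proof to compare against; your eccentricity-versus-degree dichotomy is the standard argument for this result, and the only remark worth adding is the one you already made, that the bound can be mildly sharpened by using the asymmetric Ramsey number since $w$ itself supplies one vertex of the clique.
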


For higher connectivities, similar results are known, but they use different containment relations on graphs. In particular, for $k=2,3,4$, unavoidable large $k$-connected minors and for $j=2,3$, $j$-connected topological-minors are determined in \cite{Unavoidabletopminor3conngraphs}; the corresponding parallel-minors are determined in \cite{unavoidableparminor4conngraphs}. These results are very useful in analyzing graph structures concerning minors. However, they provide very little help in dealing with induced subgraphs. For instance, if we are interested in the behavior of a large 2-connected claw-free graph, then knowing that such a graph has a large $C_n$ or $K_{2,n}$ minor is not of much help. In such a situation, it is desirable to have the unavoidable large 2-connected induced subgraphs. In an earlier paper \cite{unavoidableinducedsubgraphs}, the authors of this paper determined all these unavoidable graphs, which we describe below.

We define two families of graphs that generalize, respectively, stars and paths in  \cref{thm:doov}. Let $n$ be an integer exceeding two. First, let $\Theta_n$ be the family of graphs $G$ such that $G$ consists of two specified vertices and $n$ pairwise internally disjoint paths between those two vertices. These graphs can be considered as a generalization of stars. Next, let $\Lambda_n$ be the family of graphs $G$ constructed as follows. Let $G_1,\dots ,G_k$ where $k\ge n$ be disjoint graphs, such that each $G_i$ is isomorphic to $K_3$ or $K_4$. For each $i=1,\dots,k$, let $e^i$ and $f^i$ be distinct edges of $G_i$, where $e^i$ is not adjacent to $f^i$ if $G_i\cong K_4$. Then $G$ is obtained by identifying $f^i$ with $e^{i+1}$, for all $i=1,\dots, k-1$, and then deleting some (could be all or none) of the identified edges. Note that each cycle of length at least $n+2$ is a member of $\Lambda_n$. These graphs can be considered as a generalization of paths.

With these definitions in mind, the main result of~\cite{unavoidableinducedsubgraphs}
may be restated as follows.

\begin{restatable}{theorem}{thmfinite}\label{thm:finite}
For every integer $r$ exceeding two, there is an integer $f_{\ref{thm:finite}}(r)$ such that every $2$-connected graph of order at least $f_{\ref{thm:finite}}(r)$ contains $K_r$ or a member of $\Theta_r\cup \Lambda_r$ as an induced subgraph.
\end{restatable}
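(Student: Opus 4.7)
The starting point is Theorem \ref{thm:doov}. Since a $2$-connected graph is in particular connected, applying Theorem \ref{thm:doov} with a parameter $N$ chosen sufficiently large in terms of $r$ extracts an induced $K_N$, $K_{1,N}$, or $P_N$, and the clique case is already a conclusion of Theorem \ref{thm:finite}. The remaining task is to use $2$-connectivity to promote a large induced star or a long induced path to an induced member of $\Theta_r$ or $\Lambda_r$; the exact function $f_{\ref{thm:finite}}(r)$ will emerge as a tower of Ramsey-type bounds that account for the various cleanup steps.

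\textbf{Star case.} Let $v$ be the centre of an induced $K_{1,N}$ with independent leaf set $L$. Since $G-v$ is connected, one can choose a minimal tree $T\subseteq G-v$ with leaf set $L$. A standard dichotomy on $T$ gives either (i) an internal vertex $w$ of high degree in $T$, or (ii) a long path in $T$ meeting many leaves of $L$; the latter reduces to the path case below. In situation (i), concatenating the internally disjoint $w$-to-$u_i$ paths in $T$ with the star edges $v u_i$ yields many internally disjoint $v$--$w$ paths. A final Ramsey cleanup on pairs of these paths---colouring a pair by whether or not it has a cross-edge---selects a pairwise-independent sub-collection, producing an induced member of $\Theta_r$.

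\textbf{Path case.} Let $P = v_0 v_1 \cdots v_s$ be a long induced path, and use $2$-connectivity to find a $v_0$--$v_s$ path $Q$ internally disjoint from the interior of $P$, so $C=P\cup Q$ is a cycle. Record the attachment pattern on $P$ of each vertex outside the interior of $P$ (internal vertices of $Q$ together with any further chord endpoints). Iterated Ramsey on long sub-intervals of $P$ produces a homogeneous sub-configuration that must fall into one of two types: long-range attachments concentrate between two fixed vertices, giving many internally disjoint $v_i$--$v_j$ paths that pool (after another cleanup) into a $\Theta_r$; or the attachments come in a short local pattern repeating regularly along $P$, producing a long edge-glued chain of triangles or $K_4$'s---that is, a member of $\Lambda_r$, where the precise repeating pattern decides whether each block is a $K_3$ or a $K_4$ and which of the identified edges have to be deleted.

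\textbf{Main obstacle.} The hardest part is enforcing that the extracted subgraph is \emph{induced}. Pairwise-independence in $\Theta_r$, and the non-adjacent-edges condition on the $K_4$-blocks in $\Lambda_r$, are both easily spoiled by unforeseen chords, so every extraction step must be followed by a Ramsey-style cleanup; nesting these cleanups inside the main case analysis is the source of both the tower-like growth of $f_{\ref{thm:finite}}(r)$ and most of the technical complexity.
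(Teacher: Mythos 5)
Your route is not the one this paper takes: the paper deduces \cref{thm:finite} from the infinite result (\cref{thm:infinite}) by a compactness argument, using \cref{lem:nobigcycle} to enumerate the vertices of a hypothetical sequence of counterexamples so that $2$-connectivity survives in the limit graph, and then observing that each infinite unavoidable subgraph contains a finite $K_r$, a member of $\Theta_r$, or a long induced cycle (hence a member of $\Lambda_r$). You instead propose to redo the direct finite argument starting from \cref{thm:doov}. That is a legitimate alternative strategy (it is essentially the strategy of the earlier paper cited for \cref{thm:finite}), but as written your two promotion steps have genuine gaps, and those steps are exactly where the work lies.

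First, in the star case your ``final Ramsey cleanup'' colours pairs of $v$--$w$ paths by the presence of a cross-edge and then ``selects a pairwise-independent sub-collection.'' Ramsey only guarantees a large monochromatic family, and the monochromatic colour may be \emph{``has a cross-edge''}; in that case you obtain many pairwise interconnected paths, no member of $\Theta_r$, and your sketch says nothing about how to proceed. Handling that outcome (and chords inside the individual tree paths, and edges from $v$ into their interiors) is the substantive content; compare the careful grading of paths in \cref{def:vconntree} and the ensuing observations, which are designed precisely to limit such cross-edges to controlled ``sidesteps.'' Second, in the path case the asserted homogenization into two types --- long-range attachments concentrating between two fixed vertices, or a short local pattern yielding an edge-glued chain of triangles and $K_4$'s --- is stated, not proved, and it is not exhaustive as described. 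Ladder-like configurations (a single bridge of unbounded span whose vertices attach only locally to $P$, families of pairwise crossing jumps, or locally attached bridges joined to one another off $P$) fit neither description: they contain no induced chain of triangles or $K_4$'s, and one must instead reroute to extract a long induced cycle, i.e.\ the degenerate member of $\Lambda_r$ with all identified edges deleted. This rerouting/cleaning step (the finite analogue of what \cref{sec:locfinite,sec:infmessyladder} do with ties and crosses) needs its own argument and is missing. So the proposal is a reasonable plan for the direct proof, but the dichotomies it invokes are precisely the theorem's hard content and are not yet justified.
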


It is worth pointing out that there are a few similar results for induced subgraphs. 
In \cite{homogeneous}, unavoidable large homogeneous-free induced subgraphs are determined, and in \cite{doublecon}, unavoidable large doubly-connected induced subgraphs are determined.  \cref{thm:finite} is an addition to this line of results.

The goal of this paper is to extend \cref{thm:finite} to infinite graphs. 
For infinite graphs, the terms \emph{subgraph}, \emph{induced subgraph}, and \emph{subdivision} are defined in the natural way found in \cite{west}. 
It should be emphasized that when an edge is subdivided,  it may be subdivided only finitely many times. 
This paper will consider only simple graphs. 
That is, we implicitly assume that every involved graph has no multiple edges and no loops. 

Note that \cref{thm:ramsey} has a natural analog for infinite graphs. 
The theorems for infinite graphs in this paper apply to graphs with arbitrary infinite cardinality. 
However, the graphs presented as unavoidable induced subgraphs, using the subscript $\infty$, are countably infinite.

\begin{theorem}[\cite{ramsey}]
\label{thm:inframsey}
Every infinite graph contains as an induced subgraph either an infinite complete graph $K_\infty$ or its complement $\overline {K}_\infty$.
\end{theorem}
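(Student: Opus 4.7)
The plan is to apply the classical iterative pigeonhole construction that appears in Ramsey's original paper. Given an infinite graph $G$, I would inductively build an infinite sequence of distinct vertices $v_0, v_1, v_2, \ldots$ together with a sequence of labels $t_0, t_1, t_2, \ldots \in \{\textsc{adj}, \textsc{non}\}$, satisfying the invariant that for every $i < j$, the edge $v_i v_j$ is present precisely when $t_i = \textsc{adj}$. Once such a sequence exists, an application of the infinite pigeonhole principle yields an infinite subsequence of indices on which the label is constant, and the induced subgraph on the corresponding vertices is $K_\infty$ if that constant label is $\textsc{adj}$ and $\overline K_\infty$ otherwise.

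To construct the sequence, I would in parallel define a nested family of infinite sets $V(G) = V_0 \supseteq V_1 \supseteq \cdots$. At step $i$, having built an infinite $V_i \subseteq V(G) \setminus \{v_0, \ldots, v_{i-1}\}$, pick any $v_i \in V_i$. The set $V_i \setminus \{v_i\}$ is the disjoint union of the neighbors and non-neighbors of $v_i$ within it, so one of these two parts is infinite; take that infinite part as $V_{i+1}$, and let $t_i$ record which side was chosen. The invariant holds because every future vertex $v_j$ (with $j > i$) lies in $V_{i+1}$, which by construction sits uniformly on one side of $v_i$'s adjacency.

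There is no substantive obstacle here; this is the standard proof of the infinite Ramsey theorem, and the statement is cited in the paper purely as motivation for the 2-connected analog developed later. One small remark is that $G$ may have arbitrary infinite cardinality, but the recursion only uses the facts that $V_0$ is infinite and that splitting an infinite set into two parts leaves at least one part infinite, so the argument goes through uniformly regardless of $|V(G)|$. The construction extracts a countably infinite monochromatic induced subgraph, in line with the paper's convention that the unavoidable substructures denoted with the subscript $\infty$ are countably infinite.
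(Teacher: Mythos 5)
Your proof is correct: it is the standard nested-infinite-sets argument for the infinite Ramsey theorem, and your remark that only the infinitude of $V_0$ (not its cardinality) is needed matches the paper's convention that the extracted substructure is countably infinite. The paper itself cites this result from Ramsey's paper without proof, so there is no alternative argument in the text to compare against.
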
 

Let $G$ be an infinite graph. 
We call $G$ \emph{connected} if it has a path between every two of its vertices; we call $G$ \emph{$2$-connected} if $G- v$ is connected for every vertex $v$ of $G$.
A \emph{ray}, denoted $P_\infty$, consists of an infinite sequence of vertices $\{v_1,v_2,v_3,\dots\}$ with edges of the form $v_iv_{i+1}$ for $i\in\mathbb{N}$.
The vertex $v_1$ is the \emph{initial vertex} of the ray.
A graph with no vertices of infinite degree is called \emph{locally-finite}.  
In \cite{konig}, K\"{o}nig proved his Infinity Lemma stated below. 
\begin{theorem}\label{thm:konig}
	Every infinite connected graph has a vertex of infinite degree or contains an induced ray.
\end{theorem}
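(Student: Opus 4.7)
The plan is to prove the contrapositive: assume that $G$ is an infinite connected graph in which every vertex has finite degree, and construct a ray. The natural way to do this is to stratify $V(G)$ by distance from a fixed root vertex $v_0$, and then invoke König's Infinity Lemma on the resulting level decomposition.

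More precisely, fix $v_0 \in V(G)$ and let $V_n = \{u \in V(G) : \dist(v_0,u) = n\}$ for every $n \geq 0$. The first step is to verify that each $V_n$ is finite. This follows by induction: $V_0 = \{v_0\}$, and if $V_{n-1}$ is finite, then $V_n$ is contained in the union of the (finite) neighborhoods of the vertices of $V_{n-1}$, hence is finite. The second step is to show that $V_n$ is nonempty for every $n$. Since $G$ is connected, $V(G) = \bigcup_{n \geq 0} V_n$; if some $V_n$ were empty, then every vertex would lie in the finite set $\bigcup_{m < n} V_m$, contradicting the assumption that $G$ is infinite.

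The third step is to apply König's Infinity Lemma. For each $n \geq 1$ and each $u \in V_n$, the definition of distance guarantees at least one neighbor of $u$ in $V_{n-1}$; pick one and call it $f(u)$. The Lemma then produces an infinite sequence $v_0, v_1, v_2, \ldots$ with $v_n \in V_n$ and $v_{n-1} = f(v_n)$. Because the sets $V_n$ are pairwise disjoint, the vertices $v_n$ are pairwise distinct, and consecutive ones are adjacent, so this sequence is a ray $P_\infty$ with initial vertex $v_0$.

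I do not expect a serious obstacle here; the argument is essentially the standard derivation of this theorem from König's Infinity Lemma. The one place that needs a moment of care is the finiteness of each level $V_n$, which relies crucially on the hypothesis that no vertex has infinite degree — without that, $V_1$ alone could already be infinite and the inductive step would fail.
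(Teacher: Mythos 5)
Your argument is correct: the level decomposition by distance from a root, the finiteness and nonemptiness of each level, and the appeal to K\"onig's Infinity Lemma constitute exactly the standard derivation. The paper gives no separate proof of this statement---it simply notes that K\"onig's Infinity Lemma implies it---so your write-up is essentially the same approach, just spelled out in full.
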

The following result for infinite connected graphs is an immediate consequence of \cref{thm:inframsey,thm:konig}.

\begin{theorem}\label{thm:infconnectedinduced}
Every infinite connected graph contains an induced $K_\infty$, $K_{1,\infty}$, or $P_\infty$.
\end{theorem}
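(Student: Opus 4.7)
The plan is to dichotomize via König's theorem (\cref{thm:konig}): either $G$ contains a vertex of infinite degree, or $G$ contains a ray. Each case will lead to one of the three advertised induced subgraphs, with only the second case requiring a small amount of work beyond invoking Ramsey.

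First, if some vertex $v$ has infinite degree, I would apply \cref{thm:inframsey} to the induced subgraph on $N(v)$. The result is either an infinite clique inside $N(v)$, which is already an induced $K_\infty$ in $G$, or an infinite independent set $I \subseteq N(v)$, in which case $\{v\}\cup I$ induces a $K_{1,\infty}$.

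Otherwise every vertex of $G$ has finite degree, and \cref{thm:konig} then delivers a ray $R = v_1 v_2 v_3 \cdots$. This ray need not be induced in $G$, so my next step is to prune it by a greedy ``jump to farthest neighbor'' procedure: set $u_1 = v_1$, and inductively let $u_{i+1} = v_{n_{i+1}}$ where $n_{i+1}$ is the largest index such that $v_{n_{i+1}}$ is adjacent to $u_i$ in $G$. Finiteness of $\deg(u_i)$ ensures the maximum $n_{i+1}$ exists; the ray-edge $u_i v_{n_i + 1}$ forces $n_{i+1} > n_i$, so the sequence strictly advances along $R$; and the maximality of $n_{i+1}$ prevents any chord $u_i u_j$ with $j \geq i+2$, since such a $u_j = v_{n_{j-1}}$ has index exceeding $n_{i+1}$. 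Consequently $\{u_1,u_2,\ldots\}$ induces a $P_\infty$.

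The only real obstacle is this last step of extracting an induced ray from a not-necessarily-induced one; local finiteness makes the greedy argument go through cleanly, and the remaining cases are immediate from König and Ramsey, which is why the theorem can reasonably be described as an \emph{immediate} consequence of \cref{thm:inframsey,thm:konig}.
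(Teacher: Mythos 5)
Your proof is correct and takes essentially the approach the paper intends: the paper offers no separate argument, declaring the theorem an immediate consequence of \cref{thm:inframsey,thm:konig}, and your dichotomy (Ramsey applied to $N(v)$ when some vertex has infinite degree, a ray from K\"onig in the locally-finite case) is exactly that, with the greedy ``farthest neighbor'' pruning supplying the one detail the paper leaves implicit, namely that a ray in a locally-finite graph can be converted to an induced one. Only a harmless index slip: in the chord argument you should write $u_j = v_{n_j}$ rather than $v_{n_{j-1}}$, and since $n_j > n_{i+1}$ for $j \ge i+2$, the maximality of $n_{i+1}$ rules out the chord just as you say.
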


Let $\Theta_\infty$ be the family of graphs $G$ such that $G$ consists of two specified vertices and an infinite number of pairwise
internally disjoint paths between those two vertices.
Two examples from this family are shown below in \cref{fig:K2infandK2infplus}.
In all figures in this paper, thin line segments represent single edges, and thick line segments represent paths (which may consist of just a single edge).
\begin{figure}[h!]
	\begin{subfigure}[b]{.47\textwidth}
		\begin{center}
			\begin{tikzpicture}
			[scale=.8,auto=left,every node/.style={circle, fill, inner sep=0 pt, minimum size=.75mm, outer sep=0pt},line width=.8mm]
			\node(1) at (2.15,3){};
			\node (2) at (3.45,3){};
			\node (3) at (1,1){};
			\node (4) at (2.25,1){};
			\node(5) at (3.5,1){};
			\node (6) at (4.75,1){};
			\draw[dotted, line width=.4mm] (3.75,1)--(4.25,1);
			\draw (2.15,3)--(1,1); \draw (2.15,3)--(2.25,1);\draw(2.15,3)--(3.5,1);\draw (2.15,3)--(4.75,1);
			\draw (3.45,3)--(1,1); \draw (3.45,3)--(2.25,1);\draw(3.45,3)--(3.5,1);\draw (3.45,3)--(4.75,1);
		\end{tikzpicture}
	\end{center}
	\caption{}
	\label{fig:K2inf}
	\end{subfigure}
\begin{subfigure}[b]{.47\textwidth}
\begin{center}
\begin{tikzpicture}
	[scale=.8,auto=left,every node/.style={circle, fill, inner sep=0 pt, minimum size=.75mm, outer sep=0pt},line width=.8mm]
	\node(1) at (2.15,3){};
	\node (2) at (3.45,3){};
	\node (3) at (1,1){};
	\node (4) at (2.25,1){};
	\node(5) at (3.5,1){};
	\node (6) at (4.75,1){};
	\draw[dotted, line width=.4mm] (3.75,1)--(4.25,1);
	\draw (2.15,3)--(1,1); \draw (2.15,3)--(2.25,1);\draw(2.15,3)--(3.5,1);\draw (2.15,3)--(4.75,1);
	\draw (3.45,3)--(1,1); \draw (3.45,3)--(2.25,1);\draw(3.45,3)--(3.5,1);\draw (3.45,3)--(4.75,1);
	\draw[line width=.4mm] (2.15,3)--(3.45,3);
\end{tikzpicture}
\end{center}
\caption{}
\label{fig:K2infplus}
\end{subfigure}
\caption{Two type of graphs in $\Theta_{\infty}$}
\label{fig:K2infandK2infplus}
\end{figure}

Let $F_\infty$ be the graph obtained from a ray $r_1r_2...$ by adding a new vertex $r$ and joining $r$ with every $r_i$; let $\mathscr F_\infty$ be the family of graphs obtained from $F_\infty$ by subdividing each of the edges an arbitrary number, possibly zero, of times; see \cref{fig:famFinf}. 
Let $F_{\infty}^{\Delta}$ be obtained from $F_\infty$ by first subdividing each $rr_i$ and  $r_ir_{i+1}$ by $s_i$ and $t_i$, respectively, and then joining $s_i$ with $t_i$ for all $i$; let $\mathscr F_\infty^\Delta$ be the family of graphs obtained from $F_\infty^\Delta$ by subdividing each edge that is not in any triangle, an arbitrary number, possibly zero, of times; see \cref{fig:Fdeltinf}. 

\begin{figure}[h!]
	\begin{subfigure}[b]{.47\textwidth}
		\begin{center}
			\begin{tikzpicture}
				[scale=1,auto=left,every node/.style={circle, fill, inner sep=0 pt, minimum size=.75mm, outer sep=0pt},line width=.8mm]
				\node(1) at (2.65,3){};
				\node (3) at (1,1){};
				\node (4) at (2.25,1){};
				\node(5) at (3.5,1){};
				\node (6) at (4.75,1){};
				\draw[dotted, line width=.4mm] (4.95,2)--(5.35,2);
				\draw (2.65,3)--(1,1); \draw (2.65,3)--(2.25,1);\draw(2.65,3)--(3.5,1);\draw (2.65,3)--(4.75,1);
				\draw(1,1)--(5,1);
			\end{tikzpicture}
		\end{center}
		\caption{$\mathscr{F}_{\infty}$}
		\label{fig:famFinf}
	\end{subfigure}
	\begin{subfigure}[b]{.47\textwidth}
		\begin{center}
			\begin{tikzpicture}
				[scale=1,auto=left,every node/.style={circle, fill, inner sep=0 pt, minimum size=.75mm, outer sep=0pt},line width=.8mm]
				\node(1) at (2.65,3){};
				\node (3) at (1,1){};
				\node (3a) at (1.5,1){};
				\node (3b) at (1.25,1.5){};
				\node (4) at (2.25,1){};
				\node (4b) at (2.5,1.5){};
				\node (4a) at (2.75,1){};
				\node(5) at (3.5,1){};
				\node (5a) at (4,1){};
				\node (5b) at (3.75,1.5){};
				\node (6) at (4.75,1){};
				\node (6a) at (5.25,1){};
				\node (6b) at (5,1.5){};
				\draw[dotted, line width=.4mm] (5.45,2)--(5.75,2);
				\draw (2.65,3)to (1.25,1.5); \draw (2.65,3)--(2.5,1.5);\draw(2.65,3)--(3.75,1.5);\draw (2.65,3)--(5,1.5);
				\draw (1.5,1)--(2.25,1); \draw (2.75,1)--(3.5,1); \draw (4,1)--(4.75,1); 
				\draw[line width=.4mm] (1,1)--(1.5,1)-- (1.25,1.5)--(1,1);
				\draw[line width=.4mm] (2.25,1)--(2.75,1)--(2.5,1.5)--(2.25,1);
				\draw[line width=.4mm] (3.5,1)--(4,1)--(3.75,1.5)--(3.5,1);
				\draw[line width=.4mm] (4.75,1)--(5.25,1)--(5,1.5)--(4.75,1);
				\draw (5.25,1)--(5.5,1);
			\end{tikzpicture}
		\end{center}
		\caption{$\mathscr{F}_{\infty}^{\Delta}$}
		\label{fig:Fdeltinf}
	\end{subfigure}
	\caption{Two types of fan-like structures}
	\label{fig:fans}
\end{figure}

Let $L_\infty$ be the graph obtained from two disjoint rays $p_1p_2...$ and $q_1q_2...$ by joining $p_i$ with $q_i$ for all $i$;
we call the rays \emph{rails}, let $\mathscr L_\infty$ be the family of graphs obtained from $L_\infty$ by subdividing each $p_iq_i$ at least once and each of the other edges an arbitrary number, possibly zero, of times; see \cref{fig:L1inf}. 
Let $L_\infty^\Delta$ be obtained from $L_\infty$ by first subdividing each $p_iq_i$ and  $q_iq_{i+1}$ by $s_i$ and $t_i$, respectively, and then joining $s_i$ with $t_i$ for all $i$; let $\mathscr L_\infty^\Delta$ be the family of graphs obtained from $L_\infty^\Delta$ by subdividing each edge that is not in any triangle, an arbitrary number, possibly zero, of times; see \cref{fig:L2inf}. 

Let $L_\infty^{\nabla\Delta}$ be obtained from $L_\infty^\Delta$ as follows. We first subdivide each $p_ip_{i+1}$ by $t_i'$. Then, for each $i$, let $s'_i$ be either $s_i$ or a vertex subdividing $p_is_i$. Finally, we add edges $s'_it'_i$ for all $i$. Let $\mathscr L_\infty^{\nabla\Delta}$ be the family of graphs obtained from $L_\infty^{\nabla\Delta}$ by subdividing each edge that is not in any triangle, an arbitrary number, possibly zero, of times; see \cref{fig:L3inf}. Note that some of the vertical paths could be trivial, by which we mean that the length of the path is zero. In such a case, the path is a vertex and this vertex has degree four in $L_\infty^{\nabla\Delta}$.

Graphs as illustrated in \cref{fig:infcl} are infinite analogs of graphs in $\Lambda_n$. 
An \emph{infinite slim ladder} is a locally finite graph that consists of two disjoint rays $p_1p_2...$ and $q_1q_2...$ together with an infinite set of edges of the form $p_iq_j$ (including $p_1q_1$) such that for any two such edges $p_iq_j$ and $p_{i'}q_{j'}$ we always have $(i-i')(j-j')\ge -1$. In other words, either $p_iq_j$ and $p_{i'}q_{j'}$ do not cross each other (when $(i-i')(j-j')\ge 0$) or they cross in the closest possible way (when $(i-i')(j-j')=-1$). 

\begin{figure}[h!]
	\begin{subfigure}[b]{.23\textwidth}
		\begin{center}
			\begin{tikzpicture}
				[scale=.6,auto=left,every node/.style={circle, fill, inner sep=0 pt, minimum size=.75mm, outer sep=0pt},line width=.8mm]
				\node (1) at (1,1){};\node (2) at (1,3){};
				\draw (1.center)--(2.center);
				\draw (1,1)--(5.6,1);
				\draw(1,3)--(5.6,3);
				\draw (2.25,1)--(2.25,3);
				\draw (3.5,1)--(3.5,3);
				\draw (4.75,1)--(4.75,3);
				\draw[dotted, line width=.4mm] (5.05,2)--(5.45,2);	
			\end{tikzpicture}
		\end{center}
		\caption{$\Loneinf$}
		\label{fig:L1inf}
	\end{subfigure}
	\begin{subfigure}[b]{.23\textwidth}
		\begin{center}
			\begin{tikzpicture}
				[scale=.6,auto=left,every node/.style={circle, fill, inner sep=0 pt, minimum size=.75mm, outer sep=0pt},line width=.8mm]
				\node(1) at (1.25,3){};
				\node (3) at (1,1){};
				\node (3a) at (1.5,1){};
				\node (3b) at (1.25,1.5){};
				\node (4) at (2.25,1){};
				\node (4b) at (2.5,1.5){};
				\node (4a) at (2.75,1){};
				\node(5) at (3.5,1){};
				\node (5a) at (4,1){};
				\node (5b) at (3.75,1.5){};
				\node (6) at (4.75,1){};
				\node (6a) at (5.25,1){};
				\node (6b) at (5,1.5){};
				\draw[dotted, line width=.4mm] (5.45,2)--(5.75,2);
				\draw (1)to(1.25,1.5); 
				\draw (1.5,1)--(2.25,1); \draw (2.75,1)--(3.5,1); \draw (4,1)--(4.75,1); 
				\draw[line width=.4mm] (1,1)--(1.5,1)--(1.25,1.5)--(1,1);
				\draw[line width=.4mm] (2.25,1)--(2.75,1)--(2.5,1.5)--(2.25,1);
				\draw[line width=.4mm] (3.5,1)--(4,1)--(3.75,1.5)--(3.5,1);
				\draw[line width=.4mm] (4.75,1)--(5.25,1)--(5,1.5)--(4.75,1);
				\draw(5.25,1) --(6,1);
				\draw(1.center)--(6,3);
				\draw (1.25,1.5)--(1.25,3); \draw (2.5,1.5)--(2.5,3);\draw (3.75,1.5)--(3.75,3);\draw (5,1.5)--(5,3);
			\end{tikzpicture}
		\end{center}
		\caption{$\Ltwoinf$}
		\label{fig:L2inf}
	\end{subfigure}
	\begin{subfigure}[b]{.23\textwidth}
		\begin{center}
			\begin{tikzpicture}
				[scale=.6,auto=left,every node/.style={circle, fill, inner sep=0 pt, minimum size=.75mm, outer sep=0pt},line width=.8mm]
				\node (3) at (1,1){};
				\node (3a) at (1.5,1){};
				\node (3b) at (1.25,1.5){};
				\node (4) at (2.25,1){};
				\node (4b) at (2.5,1.5){};
				\node (4a) at (2.75,1){};
				\node(5) at (3.5,1){};
				\node (5a) at (4,1){};
				\node (5b) at (3.75,1.5){};
				\node (6) at (4.75,1){};
				\node (6a) at (5.25,1){};
				\node (6b) at (5,1.5){};
				\node (7) at (1,3){};
				\node (7a) at (1.5,3){};
				\node (7b) at (1.25,2.5){};
				\node (8) at (2.25,3){};
				\node (8b) at (2.5,2.5){};
				\node (8a) at (2.75,3){};
				\node(9) at (3.5,3){};
				\node (9a) at (4,3){};
				\node (9b) at (3.75,2.5){};
				\node (10) at (4.75,3){};
				\node (10a) at (5.25,3){};
				\node (10b) at (5,2.5){};
				\draw[dotted, line width=.4mm] (5.45,2)--(5.75,2);
				\draw (1.5,1)--(2.25,1); \draw (2.75,1)--(3.5,1); \draw (4,1)--(4.75,1); 
				\draw[line width=.4mm] (1,1)--(1.5,1)--(1.25,1.5)--(1,1);
				\draw[line width=.4mm] (2.25,1)--(2.75,1)--(2.5,1.5)--(2.25,1);
				\draw[line width=.4mm] (3.5,1)--(4,1)--(3.75,1.5)--(3.5,1);
				\draw[line width=.4mm] (4.75,1)--(5.25,1)--(5,1.5)--(4.75,1);
				\draw (5.25,1) --(6,1);
				\draw (1.25,1.5)--(1.25,2.5); \draw (2.5,1.5)--(2.5,2.5);\draw (3.75,1.5)--(3.75,2.5);\draw (5,1.5)--(5,2.5);
				\draw (1.5,3)--(2.25,3); \draw (2.75,3)--(3.5,3); \draw (4,3)--(4.75,3); 
				\draw[line width=.4mm] (1,3)--(1.5,3)--(1.25,2.5)--(1,3);
				\draw[line width=.4mm] (2.25,3)--(2.75,3)--(2.5,2.5)--(2.25,3);
				\draw[line width=.4mm] (3.5,3)--(4,3)--(3.75,2.5)--(3.5,3);
				\draw[line width=.4mm] (4.75,3)--(5.25,3)--(5,2.5)--(4.75,3);
				\draw  (5.25,3)--(6,3);
			\end{tikzpicture}
		\end{center}
		\caption{$\Lthreeinf$}
		\label{fig:L3inf}
	\end{subfigure}
	\begin{subfigure}[b]{.23\textwidth}
		\begin{center}
			\begin{tikzpicture}
				[scale=.6,auto=left,every node/.style={circle, fill, inner sep=0 pt, minimum size=.75mm, outer sep=0pt}, line width=.4mm]
				\node (1) at (1,1){};
				\node (2) at (1,3){};
				\draw[line width=.8mm](1,1) to (4.05,1); \draw (4.05,1)--(4.6,1);\draw[line width=.8mm, line cap=round] (4.6,1)--(5.6,1); \draw[line width=.8mm](1,3) to (4.05,3);\draw (4.05,3)--(4.6,3);\draw[line width=.8mm]  (4.6,3) to (5.6,3);\draw(1,1)--(1,3);
				\draw (1.75,3) --(1.25,1);\draw (1.75,3)--(1.75,1);\draw(1.75,3)--(2.25,1);
				\draw (2.5,3) to (2.5,1);\draw (3.5,1)--(3.5,3);
				\draw (4.05,3) to (4.6,1);\draw(4.6,3) to (4.05,1);
				\draw[dotted] (4.95,2)--(5.35,2);
			\end{tikzpicture}
		\end{center}
		\caption{Infinite Slim Ladder}
		\label{fig:infcl}
	\end{subfigure}
	\caption{Four types of ladder-like structures}
	\label{fig:ladders}
\end{figure}

The main result of the paper is stated below.  
\begin{restatable}{theorem}{thminftwocon}\label{thm:infinite}
Let $G$ be a $2$-connected infinite graph. 
Then $G$ contains one of the following as an induced subgraph: $K_{\infty}$, an infinite slim ladder, or a member of $\Theta_\infty\cup\TFinf\cup\mathscr{F}_{\infty}^{\Delta}\cup\Loneinf\cup\Ltwoinf\cup\Lthreeinf$.
\end{restatable}

	To make a clear distinction between vertices and edges, we will use subscripts on lowercase letters for vertices such as $v_i$ and superscripts on lowercase letters for edges such as $e^i$.	
 
	To prove the main theorem, we consider the cases that an infinite 2-connected graph $G$ either has a vertex of infinite degree or it does not.
	 \cref{sec:infvertex} discusses the case that $G$ has a vertex of infinite degree.  
	 The main challenge in going from finite graphs to infinite graphs is in that section.
	 In \cref{sec:locfin}, we start with a locally-finite graph and obtain one of the four ladder-like structures from \cref{fig:ladders}.
	 In \cref{sec:provingmainthm}, we combine the results of \cref{sec:infvertex,sec:locfin} to prove \cref{thm:infinite}, and we present an alternate proof to \cref{thm:finite} using \cref{thm:infinite}.
	 
\section{Vertex of Infinite Degree}\label{sec:infvertex}
In this section, we present the unavoidable induced subgraphs of an infinite 2-connected graph with a vertex of infinite degree.

Next, we will define notation to make it easier to refer to specific subpaths.
The vertices of a ray $P$ with initial vertex $p$ have a natural ordering:
we write $u\le v$ whenever $u$ lies on the $p, v$-subpath of~$P$.
We write $u < v$ whenever $u\leq v$ and $u\neq v$.
For a ray $P$, let $P[u,v]$ be the $u,v$-subpath of $P$, let $P[u,v)$ be the subpath of $P$ that consists of vertices $u\le x<v$, let $P(u,v)$ be the subpath of $P$ that consists of vertices $u<x<v$, let $P[u,\infty)$ be the subray of $P$ has initial vertex $u$, and let $P(u,\infty)$ be the subray of $P$ that consists of vertices $u<x<\infty$.
For the following lemmas, we need to define a tree $T$ that controls the edges of an infinite connected graph $G$ that are not in $T$ and whose endpoints are in $T$.

\begin{definition}\label{def:vconntree}
Let $\mathcal{V}$ be an infinite independent set of vertices of a connected graph $G$.
A \emph{$\mathcal{V}$-connecting tree} is a subgraph $T$ of $G$ defined through the following inductive process.
Let $v_1$ be an arbitrary element of $\mathcal{V}$, and let $P^1$ be the graph consisting of the single vertex, $v_1$. 
To be consistent with all the other $P^i$s defined below, we consider $P^1$ as a path with endpoints $t_1$ and $v_1$ where $t_1=v_1$.
Let $P^2$ be the shortest path in $G$ from $v_1$ to a vertex in $\mathcal{V}\setminus \{v_1\}$.  
Let $v_2$ be the endpoint of $P^2$ in $\mathcal{V}\setminus\{v_1\}$ and $t_2=v_1$. 
Suppose now that $k$ is a natural number and that $v_i$, $t_i$, and $P^i$ have been defined for all $i$ in $\{1,2,\dots,k\}$.
Every path $Q$ joining a vertex in $\mathcal{V}\setminus \{v_1,v_2,\dots,v_k\}$ to a vertex of $P^1\cup P^2\cup\dots \cup P^k$ will receive a triple $(\gamma_1(Q),\gamma_2(Q),\gamma_3(Q))$ as a \emph{grade}.  
In this triple, let $\gamma_1(Q)$ be the length of $Q$, let $\gamma_2(Q)$ be the minimum $i$ such that the endpoint $u$ of $Q$ lies on $P^i$, and let $\gamma_3(Q)$ be the length of $P^i[u,t_i]$. 
Let $P^{k+1}$ be the path with lexicographically minimal grade.  
Let the endpoint of $P^{k+1}$ on $P^i$ be called $t_{k+1}$ and the endpoint of $P^{k+1}$ in $\mathcal{V}\setminus \{v_1,v_2,\dots, v_k\}$ be called $v_{k+1}$.
Let $T$ be the union of all the paths $P^k$.
\end{definition}

We have the following observations about $T$.
\begin{observation}\label{obs1} \hfill

\begin{statement}
%\begin{enumerate}[label=(\roman*)]
	\item At each step, it is always possible for us to select a path because $G$ is connected. \label{obs1i}
	\item \label{obs1iii} At the $k$-th step for $k>1$ in \cref{def:vconntree}, we added a path $P^k$ whose one endpoint, $t_k$ is on $P^1\cup P^2\cup \dots \cup P^{k-1}$ and is otherwise disjoint from $P^1\cup P^2\cup \dots\cup P^{k-1}$.
 \item \label{obs1ii} By \ref{obs1iii}, $T$ is a tree.
	\item \label{obs1iv}The tree $T$ is a union of induced paths in $G$, but $T$ is not necessarily induced in $G$.
	\item \label{obs1v}Each vertex $v_k$ belongs to $\mathcal{V}$, and no internal vertices of $P^k$ belong to $\mathcal{V}$.
	\item \label{obs1vi}Let $u$ be the neighbor of $t_k$ on $P^k$. Then $G$ has no edge between $P^k-t_k-u$ and $P^1\cup \dots \cup P^{k-1}$.
	\item \label{obs1vii}Let $u$ be the neighbor of $t_k$ on $P^k$, then for any $j<k$, the set $N_G(u)\cap V(P^j)$ can be covered by a subpath of $P^j$ on at most two edges.
\end{statement}
%\end{enumerate}
\end{observation}

For \cref{lem:treeinfvert,lem:treelocfin}, we assume that $G$ is an infinite 2-connected graph with a vertex of infinite degree $v^*$ and no $K_{\infty}$ as an induced subgraph, that $\mathcal{V}$ is an infinite independent subset of the neighborhood of $v^*$ in $G$, that $T$ is a $\mathcal{V}$-connecting tree of $G-v^*$, and that the notation used is that from \cref{def:vconntree}.
Since $T$ is infinite and connected, \cref{thm:konig} implies that $T$ contains a vertex of infinite degree or a ray.
We address the case that $T$ has a vertex of infinite degree in \cref{lem:treeinfvert} and the case that $T$ contains a ray in \cref{lem:treelocfin}.

\begin{lemma}\label{lem:treeinfvert}
	If $T$ has a vertex of infinite degree, then $G$ contains as an induced subgraph a member of the family $\Theta_{\infty}$.
\end{lemma}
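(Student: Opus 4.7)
The plan is to exploit the infinite $T$-degree of $w$ to produce infinitely many internally disjoint $wv$-paths in $G$, and then to apply \cref{thm:inframsey} twice to extract a sub-family whose union with $\{w,v\}$ is induced. Since $w$ first enters $T$ as part of a single $P^{i_0}$ (contributing at most two edges at $w$) and every further edge at $w$ in $T$ comes from a distinct $P^j$ with $t_j=w$, the infinite degree of $w$ in $T$ forces the index set $J_0:=\{j:t_j=w\}$ to be infinite. For each $j\in J_0$ let $u_j$ be the neighbour of $w$ on $P^j$; since distinct $P^j$'s meet only at $w$, the $u_j$'s are pairwise distinct. Applying \cref{thm:inframsey} to $\{u_j:j\in J_0\}$ and invoking the standing hypothesis that $G$ has no induced $K_\infty$ to kill the clique alternative, thin $J_0$ to an infinite $J_1$ with $\{u_j:j\in J_1\}$ independent in $G$.

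For each $j\in J_1$ let $v'_j$ be the vertex of $P^j\setminus\{w\}$ closest along $P^j$ to $w$ that is adjacent to $v$ in $G$; such a vertex exists because $v_j\in\mathcal{V}$ qualifies. Let $R_j$ be the $wv$-path obtained by appending the edge $v'_jv$ to $P^j[w,v'_j]$. Each $P^j$ is an induced path in $G$ (as a shortest path in the construction of \cref{def:vconntree}), and the extremal choice of $v'_j$ ensures $v$ has no chord to any interior vertex of $P^j[w,v'_j)$, so each $R_j$ is an induced $wv$-path. Observations~(vi) and~(vii) of the $\mathcal{V}$-connecting tree restrict every possible chord between the interiors of $R_k$ and $R_l$ (for $k<l$ in $J_1$) to an edge from $u_{j_l}$ into the finite set $P^{j_k}(w,v'_{j_k}]$, and~(vii) further bounds such chord-neighbours to a sub-path of $P^{j_k}$ of at most two edges. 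Now $2$-colour each pair $\{k,l\}\subseteq J_1$ with $k<l$ by calling $\{k,l\}$ \emph{good} if $u_{j_l}$ has no neighbour on $P^{j_k}(w,v'_{j_k}]$ and \emph{bad} otherwise, and use \cref{thm:inframsey} to obtain an infinite monochromatic $J_*\subseteq J_1$.

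If $J_*$ is entirely good, then $\{w,v\}$ together with $\{R_j:j\in J_*\}$ induces a member of $\Theta_\infty$ and the proof is complete. If $J_*$ is entirely bad, fix any $k_0\in J_*$; every $l\in J_*$ with $l>k_0$ then supplies a neighbour of $u_{j_l}$ inside the finite set $P^{j_{k_0}}(w,v'_{j_{k_0}}]$, and the pigeonhole principle produces a single vertex $y^*$ in that set adjacent to infinitely many of the $u_{j_l}$'s; write $L$ for this infinite index set. Since $\{u_{j_l}:l\in L\}$ is pairwise non-adjacent and each $u_{j_l}$ is joined to both $w$ and $y^*$, the induced subgraph on $\{w,y^*\}\cup\{u_{j_l}:l\in L\}$ is itself a member of $\Theta_\infty$, built from countably many internally disjoint length-$2$ paths between $w$ and $y^*$ (possibly together with the edge $wy^*$). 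The main obstacle is precisely this bad case: chord edges from the various $u_{j_l}$'s to the earlier $R_k$'s threaten to destroy the original candidate $\Theta_\infty$, and the crucial feature is that observations~(vi) and~(vii) confine these chords so tightly that, whenever they occur, they themselves exhibit a new hub $y^*$ that serves as the second centre of an alternative $\Theta_\infty$.
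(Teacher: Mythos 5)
Your proof is correct, and its first half coincides with the paper's: both identify the infinite-degree vertex $w$ of $T$ (the paper's $x$) as $t_j$ for infinitely many $j$, apply \cref{thm:inframsey} to make the neighbours $u_j$ independent, and use the $v$-neighbour on each $P^j$ closest to $w$ to form candidate $wv$-paths, with observation (vi) confining any cross-chord to an edge leaving the vertex $u_j$ of the later path. Where you diverge is the endgame. The paper packages the possible chords into the bipartite graph $H$ induced on $N_1\cup N_2$ (neighbours of $x$ and vertices at distance two along the paths) and splits according to whether $H$ has an infinite vertex in $N_1$, an infinite vertex in $N_2$, or is locally finite; in the last case it extracts an induced infinite matching and threads each path of the $\Theta_\infty$ through a matching edge, and it needs observation (vii) to pin chord targets to $N_2$. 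You instead two-colour pairs of indices (chord present or not) and apply Ramsey a second time: the all-good outcome yields the $\Theta_\infty$ with hubs $w$ and $v$ directly, while the all-bad outcome, after a pigeonhole on the finite initial segment of one fixed earlier path, produces a concentrated chord target $y^*$ and hence an induced $K_{2,\infty}$ with hubs $w$ and $y^*$. Your route avoids both the induced-matching construction and observation (vii) altogether (finiteness of the segment suffices for the pigeonhole), at the cost of a second Ramsey application; the paper's route keeps the second hub equal to $v$ or $x$ except in its $N_2$-subcase, which is the analogue of your $y^*$. Only cosmetic points: the pair-colouring should formally be applied to the auxiliary graph on $J_1$ whose edges are the bad pairs before invoking \cref{thm:inframsey}, and the notation $u_{j_l}$ versus $u_l$ should be fixed once $J_1$ is enumerated; neither affects correctness.
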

\begin{proof}
	Since $T$ has a vertex of infinite degree, it follows that $t_i$ is the same vertex of $G$ for an infinite sub-sequence $\mathcal{I}$ of natural numbers; call this vertex $x$.
	For each $i\in\mathcal{I}$, let $w_i$ be the neighbor of $v^*$ on $P^i(x,v_i]$ such that the length of $P^i(x,w_i]$ is minimal.
    Let $x_i$ be the member of $N_T(x)$ on the path $P^i$, let $\mathcal{I}_1$ be an infinite sub-sequence of $\mathcal{I}$ such that $\{x_i~:~i\in \mathcal{I}_1\}$ is stable in $G$, and let $\mathcal{P}=\{P^i[x,w_i]~:~i\in\mathcal{I}_1\}$.
	Let $T^1$ be the sub-tree of $T$ consisting of the paths in $\mathcal{P}$.
	Note that $T^1$ is a subdivision of $K_{1,\infty}$ and that $T^1$ is not necessarily induced in $G$.
	
	Suppose that $x_i$ is a neighbor of $v^*$ for infinitely many $i$ in $\mathcal{I}_1$. 
	Let $\mathcal{I}_{2a}$ be the sub-sequence of $\mathcal{I}_1$ of indices $i$ such that $x_i$ is a neighbor of $v^*$.
	Then the subgraph of $G$ induced by the vertices $v^*$, $x$, and $x_i$ for all $i\in\mathcal{I}_{2a}$ is a member of the family $\Theta_{\infty}$ where each path at most two edges, and the conclusion follows.
	
	We may therefore assume that $x_i$ is not a neighbor of $v^*$ for infinitely many $i$ in $\mathcal{I}_1$; and thus $P^i[x_i,w_i]$ is non-trivial.
	Let $\mathcal{I}_{2b}$ be the sub-sequence of $\mathcal{I}_1$ consisting of $i$ for which $P^i[x_i,w_i]$ is not trivial, let $\mathcal{P}^{2b}$ be the sub-sequence of $\mathcal{P}$ that consists of paths $P^j[x,w_j]$ for $j$ in $\mathcal{I}_{2b}$, and let $T^{2b}$ be the sub-tree of $T^1$ that consists of paths of $\mathcal{P}^{2b}$.
	Let $N_1=\{x_i~|~i\in\mathcal{I}_{2b}\}$, and let $N_2=\{y_j~|~P^j[x,y_j]$ has length $2\}$
 be the set of vertices of $T^{2b}$ that are distance two from $x$ in $T^{2b}$.
	By \crefWithTheorem{obs1vi} and \ref{obs1vii}, if $e$ is an edge of $G[V(T^{2b})]$ not in $T^{2b}$, then $e=x_iy_j$ with $j<i$.   
	
	Let $H$ be the subgraph of $G$ induced by the vertices of $N_1\cup N_2$.
	Note that $H$ is infinite and bipartite.
	We will consider two cases, either $H$ has a vertex of infinite degree or not.
	If $H$ has a vertex of infinite degree, say $u$, then $u\in N_2$, otherwise this would imply that $j>i$ for some edge $x_iy_j$.
 
	Suppose $H$ has a vertex of infinite degree, say $u$. 
	Let $N_1'$ be the subset of $N_1$ that consists of vertices $x_i$ such that $x_i\in N_1$ is a neighbor of $u$.
	Note that $N_1'$ is an independent set of vertices. 
	The subgraph of $G$ induced by $u$, $x$, and the vertices of $N_1'$ is $K_{2,\infty}$ and thus is a member of the family $\Theta_{\infty}$, and the conclusion follows. 
	
	We may therefore assume that $H$ is locally-finite.
    Note that $H$ has an infinite matching $M$ consisting of edges of the form $x_i y_i$ for $i$ in $\mathcal{I}_{2b}$.
    $M$ has an infinite subset $M'$ that is an induced matching selected by a greedy process in $G$ because $H$ is locally-finite.
    Let $\mathcal{J}$ be the set of indices $i$ for which $x_i y_i$ is an edge of $M'$.
    The graph induced by $v^*$, and the vertices of $P^j[x,w_j]$ for $j$ in $\mathcal{J}$ is a member of the family $\Theta_{\infty}$; and the conclusion follows.
\end{proof}

%One can construct an example of an induced member of $\Theta_\infty$ that does not include $v^*$ as follows.
%Suppose $H$ from the above proof has a vertex of infinite degree $u=y_k$ for some $k\in\mathcal{I}_{2b}$.  If, for $i,j\in\mathcal{I}_{2b}$, $H$ has an edge $x_j y_i$ for each $k\le i\le j$, then $G$ has a member of the family $\Theta_\infty$, but we cannot find a member of the family $\Theta_\infty$ that contains $v^*$.
%So, we cannot strengthen this result using these methods to guarantee that $v^*$ is in a member of $\Theta_{\infty}$.

We have considered the case that $T$ has a vertex of infinite degree. Next, we address the case that $T$ is locally-finite.

\begin{lemma}\label{lem:treelocfin}
If $T$ is locally-finite, then $G$ contains a member of $\mathscr F_\infty\cup \mathscr F_\infty^\Delta$ as an induced subgraph. 
\end{lemma}

\begin{proof}
Since $T$ is locally-finite, \cref{thm:konig} implies that $T$ contains a ray $R$ as a subgraph. Let $\mathcal I=\{i: E(P^i\cap R)\ne\emptyset\}$. Then $\cal I$ is infinite as each $P^i$ is finite. By taking a sub-ray of $R$, if necessary, we assume that the first edge of $R$ belongs to $P^{i_0}$, where $i_0=\min\{i:i\in\mathcal I\}$. Let $T'=\bigcup\limits_{i\in \mathcal{I}} P^i$. To understand the structure of $T'$, we make the following observations. 

Let $R=r_1r_2\ldots$ and let us label each edge $e_n=r_nr_{n+1}$ by $\ell(e_n):=i$, where $e_n\in E(P^i)$. In particular, $\ell(e_1)=i_0$. We claim that the sequence $\ell(e_1), \ell(e_2),\ldots$ is non-decreasing. Suppose otherwise. Then there exists the smallest index $n$ with $\ell(e_n)> \ell(e_{n+1})$. By the minimality of $\ell(e_1)$, we have that
$\ell(e_n)\ne i_0$ and 
there exists $1<m\le n$ with $\ell(e_{m-1})<\ell(e_m) = \ldots = \ell(e_n)$. This means $P^i$, where $i=\ell(e_n)$, intersects $P^1\cup \cdots \cup P^{i-1}$ on at least two vertices ($r_m$ and $r_{n+1}$), which contradicts \crefWithTheorem{obs1iii}, and thus it proves our claim. 
This claim implies that each $P^i\cap R$ is a path, which allows us to assume, without loss of generality, that $t_{i_0}$ is the initial vertex of $R$, see \cref{fig:rayinT}. 
Together with \crefWithTheorem{obs1iii}, our claim also implies that, if we consider every instance the label increases, $\ell(e_n) < \ell(e_{n+1}) = i$, then $r_{n+1}$, the common vertex of $e_n$ and $e_{n+1}$, is $t_i$. 
Since $\mathcal{I}$ is a set, we need to define the next term as
as $i^+=\min\{k\in\mathcal I: k>i\}$ for each $i\in\cal I$. 
Then $P^i\cap R= P^i[t_i,t_{i^+}]$.
Thus, $R =\bigcup\limits_{i\in\mathcal{I}}P^i[t_i,t_{i^+}]$ and $T$ is the union of $R$ and $\bigcup\limits_{i\in\mathcal{I}}P^i[t_{i^+},v_i]$. 

\begin{figure}[ht]
\begin{subfigure}[b]{.6\textwidth}
\begin{center}
			\begin{tikzpicture}
				[scale=.5,auto=left,every node/.style={circle, fill, inner sep=0 pt, minimum size=2mm}, line width=1mm]
				\node (1) at (1.75,2) [label=above:$t_{i_0}$]{};
				\node[fill=red] (2) at (5,1) [label=below:$v_{i_0}$]{};
				%\node (3) at (9,1){};
				\node[fill=red] (4) at (13,1) {};
				\node[fill=red] (5) at (17,1)[label=below:$v_i$]{};
				\node[fill=red] (6) at (21,1) [label=below:$v_{i^+}$] {};
				\node (7) at (4.3,2.5) {};
				\node[fill=red] (8) at (8.5,2.5) [label=above:$v\eq t$] {};
				\node (9) at (12.4,2.5)[label=above:$t_i$] {};
				\node (10) at (16.4,2.5)[label=above:$t_{i^+}$]{};
				\node (11) at (20.3,2.5){};
				\begin{scope}[on background layer]
				\draw[bend left=50, line cap=round,blue, line width=.8mm] (1.center) to (7.center);
				\draw[bend left=20, line cap=round,line width=.8mm] (7.center) to (2.center);
				%\draw[line width=.5mm] (4.7,2) to (7.center) to (5.25,3.1);
				%\draw[line cap=round, line width=.5mm](4.7,2) to (5.25,3.1);
				\draw[bend left=50, line cap=round,line width=.8mm,blue] (7.center) to (8.center);
				%\draw[line cap=round, line width=.5mm,gray] (8.center) to (9.25,3.1);
				%\draw[line cap=round, line width=.5mm] (8.9,1.9) to (9.25,3.1);
				\draw[bend left=50, line cap=round,blue,line width=.8mm] (8.center) to (9.center);		
				%\draw[line width=.5mm] (8.center) to (8.9,1.9);
				%\draw[bend left=20, line cap=round] (8.9,1.9) to (3.center);
				\draw[bend left=20, line cap=round,line width=.8mm] (9.center) to (4.center);
				\draw[bend left=50, line cap=round,blue,line width=.8mm] (9.center) to node [midway, above, fill=white] {$\textcolor{black}{P^i}$} (10.center);
				%\draw[line cap=round, line width=.5mm] (16.7,1.9) to (10.center);
				%\draw[line cap=round, line width=.5mm] (17.05,3.1) to (16.7,1.9);
				%\draw[line cap=round, line width=.5mm,gray] (17.05,3.1) to (10.center);		
				\draw[line cap=round, bend left=20, line width=.8mm] (10.center) to (5.center);
				\draw[line cap=round, bend left=50,blue, line width=.8mm] (10.center) to  node [midway, above, fill=white] {$\textcolor{black}{P^{i^+}}$} (11.center);
				\draw[line cap=round, bend left=20, line width=.8mm] (11.center) to (6.center);
				\draw[line cap=round, bend left=30,blue, line width=.8mm] (11.center) to node[near end, above, fill=white]{$R$}(21.9,3.1);
    \end{scope}
			\end{tikzpicture}	
		\end{center}
  \caption{$T'$}
  \label{fig:rayinT}
\end{subfigure}
\begin{subfigure}[b]{.35\textwidth}
\begin{center}	
		\begin{tikzpicture}					
			[scale=.5,auto=left,every node/.style={circle, fill, inner sep=0 pt, minimum size=2mm}, line width=.8mm]
    \node (ti) at (1,3.75) [label=above:$t_i$]{};
    \node (ti+) at (4,3.75) [label=above:$t_{i^+}$]{};
    \node (xi+) at (5.25,3.75) [label={[label distance=-2pt]above:$x_{i^+}$}]{};
    \node (tj) at (8,3.75) {};
    \node[minimum size=1.5mm] (yi) at (4,2.75) [label=left:$y_i$]{};
    \node[minimum size=1.5mm] (zi) at (4,1.75) [label=right:$z_i$]{};
    \node[fill=red] (vi) at (4,1) [label=below:$v_i$]{};
    \node[fill=red](vi+) at (8,1.75) [label=below:$v_{i^+}$]{};
    \begin{scope}[on background layer]
        \draw[line width=.8mm, line cap=round, blue] (ti.center) to (ti+.center);
        \draw[line width=.4mm, blue] (ti+.center) to (xi+.center);
        \draw[line width=.8mm,blue] (xi+.center) to (tj.center);
        \draw[line width=.4mm] (ti+.center) to (yi.center) to (zi.center);
        \draw[line width=.8mm] (zi.center) to (vi.center);
        \draw[LightGray, line width=.4mm] (yi.center) to (xi+.center);
        \draw[DarkGray, line width=.4mm] (zi.center) to (xi+.center);
        \draw[line width=.8mm] (tj.center) to (vi+.center);
    \end{scope}
   \end{tikzpicture}
   \end{center}

	\caption{Potential Edges of $G$ not in $T'$}
	\label{fig:sidestep}
 \end{subfigure}
 \caption{}
 \label{fig:schematicinfvert}
	\end{figure}
In general, $T'$ is not an induced subgraph of $G$. 
However, all extra edges are of a special type. 
For each $i\in\mathcal I$, let $x_{i^+}$ and $y_i$ be the neighbors of $t_{i^+}$ on $P^{i+}$ and $P^i[t_{i^+},v_i]$, respectively; let $z_i$ be the neighbor of $y_i$ on $P^i[y_i,v_i]$. 
Note that $y_i$ (or $z_i$) does not exist if the length of $P^i[t_{i^+}, v_i]$ is 0 (or $\le 1$). Since $T'$ is obtained from $P^{i_0}$ by repeatedly adding paths $P^i$ ($i\in\mathcal I-i_0$) along $R$, we conclude from \crefWithTheorem{obs1vi} and \ref{obs1vii} and the minimality of $\gamma_1(P^i)$ (see \cref{def:vconntree}) that every edge of $G$ not in $T'$ but with both ends in $T'$ must be $y_ix_{i^+}$ or $z_ix_{i^+}$ for some $i\in\mathcal I$, see \cref{fig:sidestep}. In particular, $R$ is an induced ray of $G$.

We can now find a desired induced subgraph in $G[V(T')\cup\{v^*\}]$. For each $i\in\cal I$, let $w_i\in N_G(v^*)$ be the vertex on $P^i(t_i,v_i]$ with $P^i[t_i,w_i]$ minimal. Such $w_i$ must exist since $v_i\in N_G(v^*)$. If $R$ contains infinitely many $w_i$, then, for a sub-ray $R'$ of $R$, the subgraph of $G$ induced by $v^*$ and the vertices of $R'$ is a member of $\mathscr F_\infty$. We may therefore assume, without loss of generality, that no $w_i$ is contained in $R$. 
This means that every $w_i$ is contained in $P^i(t_{i^+},v_i]$. 
For each $i\in\mathcal I$, let $S^i=P^i[t_{i^+},w_i]\cup \{w_iv^*\}$. Then $S_i$ is an induced path of $G$ between $v^*$ and $t_{i^+}$.
We partition $\mathcal{I}$ based on $V(S^i)\cap N_G(x_{i^+})$. 
Since this intersection always contains $t_{i^+}$, it is non-empty for each $i\in\mathcal{I}$.   
Let 
\begin{itemize}
\item $\mathcal I_1=\{i\in\mathcal I: V(S^i)\cap N_G(x_{i^+})=\{t_{i^+}\}\}$,
\item $\mathcal I_2=\{i\in\mathcal I: V(S^i)\cap N_G(x_{i^+})=\{t_{i^+},y_i\}\}$, and
\item $\mathcal I_3=\{i\in\mathcal I: V(S^i)\cap N_G(x_{i^+})\supseteq\{t_{i^+}, z_i\}\}$. 
\end{itemize}
Since $(\mathcal I_1, \mathcal I_2, \mathcal I_3)$ forms a partition of $\cal I$ and $\mathcal{I}$ is infinite, it follows that at least one of $\mathcal I_1, \mathcal I_2, \mathcal I_3$ is infinite. 
If $\mathcal I_1$ is infinite, then, let $S=\bigcup \{S^i~:~i\in\mathcal{I}_1\}$, and for a sub-ray $R'$ of $R$,  
the subgraph of $G$ induced by the vertices of $R'$ and $S$ is a member of the family $\TFinf$, as desired
If $\mathcal I_2$ is infinite, then we consider the elements of $\mathcal{I}_2$ as an increasing sequence.
Let $\mathcal I'_2$ be the sub-sequence obtained by taking every other element of $\mathcal I_2$, and let $S=\bigcup\{S_i:i\in\mathcal I_2'\}$. Then, for a sub-ray $R'$ of $R$, the subgraph of $G$ induced by the vertices of $R$ and $S$ is a member of the family $\mathscr{F}^{\Delta}$, as desired.
We may therefore assume that $\mathcal{I}_3$ is infinite.  Note that $y_i$ may be in $S^i$.
Let $S=\bigcup\{S^i-y_i~:~i\in\mathcal{I}_3\}$.
Then, for a sub-ray $R'$ of $R$, the subgraph of $G$ induced by the vertices of $R$ and $S$ is a member of the family $\mathscr{F}_\infty$, as desired.
\end{proof}	

Now, we will describe the unavoidable infinite induced subgraphs of a 2-connected graph that has a vertex of infinite degree.
\begin{lemma}\label{lem:infvert}
	Let $G$ be a 2-connected graph with a vertex of infinite degree.  Then $G$ contains as an induced subgraph either $K_{\infty}$ or a member of $\Theta_{\infty}\cup\TFinf\cup\TFinf^{\Delta}$. 
\end{lemma}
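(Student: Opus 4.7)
The plan is to reduce the statement directly to the three preceding technical lemmas by producing a suitable $\mathcal{V}$-connecting tree $T$ and then performing the tree-vs.-ray dichotomy.

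First, I would dispose of the trivial case: if $G$ contains $K_{\infty}$ as an induced subgraph, we are done, so assume otherwise. Let $v$ be an infinite vertex of $G$, and consider the (infinite) neighborhood $N_G(v)$. Apply \cref{thm:inframsey} to $G[N_G(v)]$: because $G$ has no induced $K_{\infty}$, neither does $G[N_G(v)]$, so $N_G(v)$ contains an infinite independent set $\mathcal{V}$. This is exactly the hypothesis set up in the preamble to \cref{lem:treeinfvert,lem:sidestep,lem:treelocfin}: $G$ is $2$-connected with an infinite vertex $v$, has no induced $K_{\infty}$, and $\mathcal{V}$ is an infinite independent subset of $N_G(v)$.

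Next, I would build a $\mathcal{V}$-connecting tree $T$ of $G-v$, as in \cref{def:vconntree}. We need $G-v$ to be connected for this construction to work at each step, which is guaranteed because $G$ is $2$-connected. By \cref{obs1ii,obs1iii}, $T$ is a countably infinite tree. Since $T$ is infinite and connected, \cref{thm:konig} applies, giving two cases:

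If $T$ has a vertex of infinite degree, then \cref{lem:treeinfvert} yields an induced subgraph of $G$ belonging to $\Theta_\infty$. Otherwise, $T$ is locally-finite, and \cref{lem:treelocfin} yields an induced subgraph of $G$ belonging to $\TFinf$ or $\TFinf^{\Delta}$. In either case the desired conclusion holds. Since the heavy lifting has been done in the three lemmas, the main obstacle here is purely organizational: verifying that the standing hypotheses preceding those lemmas are truly met, in particular that $\mathcal{V}\subseteq N_G(v)$ is both infinite and independent, and that $G-v$ is connected so that \cref{def:vconntree} produces a bona fide tree to which \cref{thm:konig} can be applied.
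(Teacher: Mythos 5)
Your proposal is correct and follows essentially the same route as the paper: apply Ramsey's theorem to $N_G(v)$ to get either $K_\infty$ or an infinite independent set $\mathcal{V}$, build a $\mathcal{V}$-connecting tree, and split via K\"onig's lemma into the cases handled by \cref{lem:treeinfvert} and \cref{lem:treelocfin}. Your explicit check that $G-v$ is connected (so the tree lives in $G-v$, matching the standing hypotheses) is a slightly more careful statement of what the paper does implicitly.
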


\begin{proof}
	Let $v^*$ be a vertex of infinite degree of $G$ and let $N_G(v^*)$ be the neighborhood of $v^*$.  
By \cref{thm:inframsey}, the graph induced by the vertices of $N_G(v^*)$, contains either $K_{\infty}$, and the conclusion follows, or $\overline{K}_\infty$.
Let $\mathcal{V}$ be the subset of $N_G(v^*)$ that induces $\overline{K}_{\infty}$.

Let $T$ be a $\mathcal{V}$-connecting tree of $G$ as described in \cref{def:vconntree}.
By \cref{thm:konig}, $T$ has either a vertex of infinite degree or a ray.
If $T$ has a vertex of infinite degree, then \cref{lem:treeinfvert} implies that $G$ contains as an induced subgraph a member of the family $\Theta_{\infty}$, and the conclusion follows.	
We may therefore assume that $T$ is locally-finite.
\cref{lem:treelocfin} implies that $G$ contains as an induced subgraph a member of $\TFinf \cup\TFinf^{\Delta}$, and the conclusion follows.
\end{proof}

Note that \cref{lem:infvert} is no longer true if we require the unavoidable graph to contain a prescribed vertex of infinite degree. 
For instance, let $H=K_{\infty}$, let $v^*$ be a specified vertex of $H$, and let $G$ be obtained from $H$ by subdividing at least once every edge that is incident to $v^*$.  
No induced subgraph of $G$ belongs to $\TFinf\cup \TFinf^{\Delta}\cup \Theta_{\infty}$ and no infinite clique of $G$ contains $v^*$.

\section{Locally Finite Graph}\label{sec:locfin}
In this section, we determine the unavoidable induced subgraphs of an infinite locally finite 2-connected  graph. We have seen in \cref{thm:konig} that every infinite locally finite connected graph must contain a ray. It turns out that the unavoidable graphs we are looking for are constructed from these rays. In order to identify the most relevant rays, we first need some basic properties of rays, which we present below. 

For any two rays $R_1$ and $R_2$ of a graph $G$, we write $R_1\sim R_2$ if $G$ has infinitely many disjoint paths joining the two rays. It is well known \cite{Halin2} that $R_1\sim R_2$ if and only if, for any finite $Z\subseteq V(G)$, the infinite component of $R_1-Z$ and the infinite component of $R_2-Z$ are contained in the same component of $G-Z$. In addition, $\sim$ is an equivalence relation and each equivalence class is called an \emph{end} of $G$. It is worth noting that all subrays of a ray belong to the same end.

To display all rays in an end, Halin introduced in \cite{Halin2} a special graph structure, which we call a \emph{Halin decomposition} of a graph $G$, see \cref{fig:halindecomp}. It is a sequence $\{G_i : i \in\mathbb N\}$ of subgraphs of $G$ satisfying 
\begin{enumerate}[label=(H\arabic*),leftmargin=15mm,itemsep=0pt,topsep=0pt]
\item\label{H1}  $G=\cup\{G_i:i\in\mathbb N\}$,
\item\label{H2}  $G_i \cap G_j = \emptyset$; whenever $|j-i|\ge 2$,
\item\label{H3}  for each $i\in\mathbb N$, $S_i := V(G_i \cap G_{i+1})$ is finite, and 
\item\label{H4}  for each $i\in\mathbb N$, $G_{i+1}$ has a set, $\mathcal W_{i+1}$, of $|S_i|$ disjoint paths between $S_i$ and $S_{i+1}$.
\end{enumerate}
It is worth noting, by \ref{H2}, that $S_1, S_2, \ldots$ are pairwise disjoint and, by \ref{H1} and \ref{H2}, that 
\begin{enumerate}[label=(H\arabic*),resume,leftmargin=15mm,itemsep=0pt,topsep=0pt]
\item\label{H5}  for each $i\in\mathbb N$, $A_i:=V(G_1\cup \ldots \cup G_i)-S_i$ and $B_i:=V(G_{i+1}\cup G_{i+2}\cup \ldots)-S_i$ form a partition of $V(G)-S_i$ such that $G$ has no edge between $A_i$ and $B_i$.
\end{enumerate}

\begin{figure}[htb]
\begin{center}	
    \begin{tikzpicture}					
        [scale=.85,auto=left,every node/.style={circle, fill, inner sep=0 pt, minimum size=2mm}, line width=.5mm]
       \node (s11) at (1,1) {};
       \node (s12) at (1,1.5){};
       \node (s13) at (1,2){};
       \node (s14) at (1,2.5)[label={[label distance=6pt]above:$S_1$}]{};
       \node (s21) at (3, .5){};
       \node (s22) at (3,1){};
       \node (s23) at (3,1.5){};
       \node (s24) at (3,2){};
       \node (s25) at (3,2.5){};
       \node (s26) at (3,3)[label={[label distance=6pt]above:$S_2$}]{};
       \node (s31) at (5,.5){};
       \node (s32) at (5,1){};
       \node (s33) at (5,1.5){};
       \node (s34) at (5,2){};
       \node (s35) at (5,2.5){};
       \node (s36) at (5,3){};
       \node (s37) at (5,3.5)[label={[label distance=6pt]above:$S_3$}]{};
       \node (s41) at (7,.5){};
       \node (s42) at (7,1){};
       \node (s43) at (7,1.5){};
       \node (s44) at (7,2){};
       \node (s45) at (7,2.5){};
       \node (s46) at (7,3){};
       \node (s47) at (7,3.5){};
       \node (s48) at (7,4)[label={[label distance=6pt]above:$S_4$}]{};
       \draw (-1.25,.75) to  (-1.25,2.75);
       \draw[bend right=30] (-1.25,.75) to (.75,.75);
       \draw[bend left=30] (-1.25,2.75) to node[midway, above,fill=white] {$G_1$} (.75,2.75);
       \draw[bend right=30] (1.25,.75) to (2.75,.25);
       \draw[bend left=30] (1.25,2.75) to node[midway, above,fill=white] {$G_2$} (2.75,3.25);
        \draw[bend right=30] (3.25,.25) to (4.75,.25);
       \draw[bend left=30] (3.25,3.25) to node[midway, above,fill=white] {$G_3$} (4.75,3.75);
        \draw[bend right=30] (5.25,.25) to (6.75,.25);
       \draw[bend left=30] (5.25,3.75) to node[midway, above,fill=white] {$G_4$} (6.75,4.25);
       \draw[bend right=30] (7.25,.25) to (8.75,.25);
       \draw[bend left=30] (7.25,4.25) to  (8.75,4.75);
       \draw (s11.center) to (s22.center) to (s32.center) to (s42.center) to  (8,1);
       \draw (s21.center) to (s31.center) to (s41.center) to (8,.5);
       \draw (s12.center) to (s23.center) to (s33.center) to (s43.center) to  (8,1.5);
       \draw (s13.center) to (s24.center) to (s34.center) to (s44.center) to  (8,2);
       \draw (s14.center) to (s25.center) to (s35.center) to (s45.center) to  (8,2.5);
       \draw (s26.center) to (s36.center) to (s46.center) to (8,3);
       \draw (s37.center) to (s47.center) to (8,3.5);
       \draw (s48.center) to (8,4);
       \draw[dashed] (8,.5) to (8.75,.5);
       \draw[dashed] (8,1) to (8.75,1);
       \draw[dashed] (8,1.5) to (8.75,1.5);
       \draw[dashed] (8,2) to (8.75,2);
       \draw[dashed] (8,2.5) to (8.75,2.5);
       \draw[dashed] (8,3) to (8.75,3);
       \draw[dashed] (8,3.5) to (8.75,3.5);
       \draw[dashed] (8,4) to (8.75,4);
       \draw[blue] (.75,.75) rectangle++(.5,2);
       \draw[blue] (2.75,.25) rectangle++(.5,3);
       \draw[blue] (4.75,.25) rectangle++(.5,3.5);
       \draw[blue] (6.75,.25) rectangle++(.5,4);
   \end{tikzpicture}
   \end{center}
\caption{Halin Depcomposition}
\label{fig:halindecomp}
\end{figure}

Also note that the union of all paths in $\bigcup\{\mathcal W_{i+1}:i\in\mathbb N\}$ is a graph that consists of a set $\cal W$ of disjoint rays. In general, these rays do not have to be in the same end of $G$. If $G$ has an end $\omega$ satisfying 
\begin{enumerate}[label=(H\arabic*),resume,leftmargin=15mm,itemsep=0pt,topsep=0pt]
\item\label{H6} \ all rays in $\cal W$ belong to $\omega$,
\end{enumerate} 
then the Halin decomposition is called an $\omega$-\emph{decomposition}. The following result characterizes all rays in an end $\omega$ when $G$ admits an $\omega$-decomposition.

\begin{theorem}\label{thm:displayray}
Let $\omega$ be an end of a graph $G$ and let $\{G_i : i \in\mathbb N\}$ be an $\omega$-decomposition of $G$. Then a ray $R$ of $G$ belongs to $\omega$ if and only if $R$ meets infinitely many $S_i$.
\end{theorem}
\begin{proof}
To prove the forward implication, suppose, for a contradiction, that some $R\in\omega$ intersects only finitely many $S_i$. Then, by \ref{H5}, $R$ is contained in $G_1\cup \ldots \cup G_n$ for some $n\in\mathbb N$. This, together with \ref{H5}, further implies that $S_n$ separates the infinite component of $R-S_n$ from the infinite component of $W-S_n$ for every $W\in\cal W$. Consequently, $R$ and $W$ are not in the same end, which contradicts \ref{H6}.

To prove the backward implication, suppose, for a contradiction, that some ray $R\not\in\omega$ intersects infinitely many $S_i$. Fix any $W_1\in\cal W$. Then, as $R$ and $W_1$ belong to different ends, $V(G)$ has a finite subset $Z$ and $G-Z$ has two components $G_r$ and $G_w$ such that $G_r$ contains the infinite component $R'$ of $R-Z$ and $G_w$ contains the infinite component $W_1'$ of $W_1-Z$. Since $Z$ is finite, by \ref{H1}, there exists $n\in\mathbb N$ such that $Z$ is contained in $G_1\cup \ldots \cup G_n$. On the other hand, since $R$ intersects infinitely many $S_i$, there exists an integer $m>n$ such that $R'$ intersects $S_m$. Take any $s\in V(R')\cap S_m$. Then, by \ref{H4}, $\cal W$ contains a unique ray $W_2$ that contains $s$. Let $W_2'$ be the subray of $W_2$ that starts with $s$. Since $m>n$, $W_2'$ is disjoint from $Z$. Meanwhile, since $W_2'$ and $R'$ share a common vertex $s$, $W_2'$ must be entirely contained in $G_r$. As a result, despite being from the same end, $W_1'$ and $W_2'$ are separated by $Z$, which is a contradiction. 
\end{proof}

Halin noted in \cite{halin2} that, in an $\omega$-decomposition, graphs $G_i$ don't have to be connected. Nevertheless, it is always possible to choose an $\omega$-decomposition with this connectivity property, as shown below. 

\begin{theorem}\label{thm:connectcomp}
Let $\omega$ be an end of a connected graph $G$ such that $G$ admits an $\omega$-decomposition. Then $G$ has an $\omega$-decomposition $\{G_i^*: i\in\mathbb N\}$ such that every $G_i^*$ is connected. 
\end{theorem}
\begin{proof}
We begin with an arbitrary $\omega$-decomposition $\{G_i: i\in\mathbb N\}$ of $G$. We first claim that for any $i\in\mathbb N$ and any $s_1,s_2\in S_i$, $F_i:=G_{i+1}\cup G_{i+2}\cup \ldots$ contains an $s_1s_2$-path. By \ref{H4}, $s_k$ ($k=1,2$) is contained in a unique ray $R_k$ of $\cal W$. Let $R_k'$ be the subray of $R_k$ that starts from $s_k$. Since $R_1'$ and $R_2'$ belong to the same end (by \ref{H6}), $G$ has infinitely many disjoint paths between them. Let $P$ be one of such paths that is disjoint from $S_i$ (path $P$ must exist since $S_i$ is finite). Then $P$ is contained in $F_i$ and thus $R_1'\cup R_2'\cup P$ contains an $s_1s_2$-path of $F_i$, which proves our claim.

For any $i,j\in\mathbb N$ with $i<j$, let $F_{i,j} = G_{i+1}\cup G_{i+2}\cup \ldots \cup G_j$. Then the above claim implies that, for each $i\in\mathbb N$, there exists $j\in\mathbb N$ with $j>i$ and such that, for any two vertices of $S_i$, $F_{i,j}$ contains a path between them. In other words, $S_i$ is entirely contained in a component of $F_{i,j}$. Let $\alpha(i)$ denote the smallest $j$ satisfying this property. Let $H_1=G_1$, $H_2=F_{1,\alpha(1)}$, and $H_i=F_{\alpha^{i-2}(1),\alpha^{i-1}(1)}$, for $i\ge3$. It is routine to verify that $\{H_i:i\in\mathbb N\}$ is an $\omega$-decomposition of $G$. Moreover, the choice of $\alpha$ also implies that each $V(H_i\cap H_{i+1})$ is entirely contained in a component of $H_{i+1}$. To simplify our notation, let us assume $H_i=G_i$ for all $i\in \mathbb N$. That is, each $S_i$ is entirely contained in a component $G_{i+1}'$ of $G_{i+1}$. Note that all paths in $\mathcal W_{i+1}$ are contained in $G_{i+1}'$. 

For each $i\in \mathbb N$, let $G_{i+1}'':=G_{i+1}-G_{i+1}'$. Note that $G_{i+1}''$ could be empty and $G_{i+1}$ is the disjoint union of $G_{i+1}'$ and $G_{i+1}''$. Let $G_1''=G_1$. Then, since $G$ is connected, each component of $G_i''$ must intersect $S_i$. Therefore, $G_i^*:=G_i''\cup G_{i+1}'$ is connected for every $i\in\mathbb N$. Now it is routine to verify that $\{G_i^*: i\in\mathbb N\}$ is an $\omega$-decomposition of $G$, which proves the theorem.
\end{proof}

In addition to introducing $\omega$-decomposition, Halin also established \cite{halin2} its existence for locally finite graphs.  

\begin{theorem}\label{thm:halin}
Every locally finite connected graph $G$ admits an $\omega$-decomposition for every end $\omega$ of $G$. 
\end{theorem}

We may now state and prove the main result of this section. 

\begin{theorem}\label{thm:locfin}
Let $\omega$ be an end of an infinite locally-finite 2-connected graph $G$. Then $G$ contains as an induced subgraph an infinite slim ladder or a member of $\Loneinf \cup\Ltwoinf \cup \Lthreeinf$ such that its rails are in $\omega$.
\end{theorem}
\begin{proof}
For any path $P=v_1v_2\ldots v_n$ of $G$, we denote by $\mathring{P}=P(v_1,v_n)$. If $X,Y\subseteq V(G)$ are disjoint, we call $P$ an $XY$-{\it path} if $V(P)\cap X=\{v_1\}$ and $V(P)\cap Y=\{v_n\}$. We emphasize that, in this case, $V(\mathring{P})\cap(X\cup Y) = \emptyset$. By \cref{thm:halin,thm:connectcomp}, $G$ has an $\omega$-decomposition $\{G_i: i\in\mathbb N\}$ such that each $G_i$ is connected. Let $F_0=G_1$ and $F_i=G_{3i-1}\cup G_{3i}\cup G_{3i+1}$ for all $i\in\mathbb N$. Note that $V(F_i\cap F_{i-1})=S_{3i-2}$ and $V(F_i\cap F_{i+1})=S_{3i+1}$. For any disjoint $S_{3i-2}S_{3i+1}$-paths $P,Q$ of $F_i$, it is clear that both $\mathring{P}$ and $\mathring{Q}$ are contained in $F_i-(S_{3i-2}\cup S_{3i+1})$. In addition, since $G_{3i}$ is connected and, by \ref{H5}, both $\mathring{P}$ and $\mathring{Q}$ meet $G_{3i}$, we conclude that 
 
\begin{claim}\label{cl:1}$\mathring{P}$ and $\mathring{Q}$ are contained in the same component of $F_i-(S_{3i-2}\cup S_{3i+1})$.\end{claim}

Since $G$ is 2-connected, \ref{H5} implies that $|S_1|\ge 2$. This allows us to fix two distinct vertices $x_1$ and $y_1$ in $S_1$. Suppose that distinct $x_i,y_i\in S_{3i-2}$ have been selected. We choose $x_{i+1},y_{i+1}\in S_{3i+1}$ as follows. By \ref{H4}, $F_i-(S_{3i-2}-\{x_i,y_i\})$ contains two disjoint paths from $\{x_i,y_i\}$ to $S_{3i+1}$. Thus, we can choose in $F_i$ two disjoint $S_{3i-2}S_{3i+1}$-paths $P_i$ and $Q_i$  satisfying the following: 
\begin{enumerate}[label=(\roman*),leftmargin=15mm,itemsep=0pt,topsep=0pt] 
\item\label{i}  $V(P_i)\cap S_{3i-2}=\{x_i\}$ and $V(Q_i)\cap S_{3i-2}=\{y_i\}$, 
\item \label{ii} $|E(P_i\cup Q_i)|$ is minimal, and  
\item\label{iii} subject to (i-ii), the distance between $\mathring{P}_i$ and $\mathring{Q}_i$ in $F_i-(S_{3i-2}\cup S_{3i+1})$ is minimal (which can be achieved because of \cref{cl:1}).
\end{enumerate}
Let $x_{i+1}$ and $y_{i+1}$ be the ends of $P_i$ and $Q_i$ in $S_{3i+1}$, respectively. This means we can define inductively $P_i$ and $Q_i$ for all $i\in\mathbb N$. Let $P:=\cup\{P_i:i\in\mathbb N\}$ and $Q:=\cup\{Q_i:i\in\mathbb N\}$. Then $P$ and $Q$ are disjoint rays of $G$. By \cref{thm:displayray}, both $P$ and $Q$ belong to $\omega$. In addition, both $P$ and $Q$ are induced rays. To see this, suppose, say, that $P$ has a chord $e=uv$. By \ref{H5}, some $F_i$ contains both $u$ and $v$. On the other hand, by \ref{ii}, at least one of $u$ and $v$, say, $u$, is not on $P_i$. This means that either $P_{i-1}$ or $P_{i+1}$, say, $P_{i+1}$, contains $u$. As a result, $V(P_{i+1})\cap S_{3i+1}=V(P_{i+1}\cap F_i)\supseteq\{u,x_{i+1}\}$, which contradicts \ref{i}, as $u\ne x_{i+1}$ (and this is because $u\not\in V(P_i)$ but $x_{i+1}\in V(P_i)$). Therefore, $P$ and $Q$ are disjoint induced rays of $\omega$.

Let $P=p_1p_2\cdots$ and $Q=q_1q_2\cdots$. We first consider the case that $G$ has infinitely many edges between $P$ and $Q$. For each edge $e=p_mq_n$ of $G$, we prove that $G$ does not have an edge $e'=p_{m'}q_{n'}$ satisfying $(m-m')(n-n')<-1$. Suppose such $e'$ exists. By \ref{H5}, there exists $i$ such that $P_i\cup Q_i$ contains both $p_m$ and $q_n$. Then \ref{H5} and $(m-m')(n-n')<-1$ imply that $P_i\cup Q_i$ also contains $p_{m'}$ and $q_{n'}$. However, replacing $P_i[p_m,p_{m'}]\cup Q_i[q_n,q_{n'}]$ in $P_i\cup Q_i$ by $e$ and $e'$ would create two disjoint $S_{3i-2}S_{3i+1}$-paths of shorter total length, which contradicts \ref{ii}. Therefore, $(m-m')(n-n')\ge-1$ holds for all $e,e'$.
Let $G'$ be the subgraph of $G$ induced by $V(P\cup Q)$.
If $p_1q_1\in E(G)$, then $G'$ is a slim ladder, and the conclusion follows.
Otherwise, $G'+p_1q_1$ is a slim ladder, and thus, $G'$ contains an induced slim ladder. 
So $G$ contains an induced slim ladder, and the conclusion follows.

Next, without loss of generality, we assume that $G$ has no edges between $P$ and $Q$. For each $i\in\mathbb N$, let $R_i=r_1r_2\ldots, r_k$ be one of the shortest paths between $\mathring{P}_i$ and $\mathring{Q}_i$ in $F_i-(S_{3i-2}\cup S_{3i+1})$, where $r_1\in V(P_i)$ and $r_k\in V(Q_i)$. By \ref{H5}, no vertex of $R_i$ is adjacent to any vertex of $P\cup Q$ outside $P_i\cup Q_i$. In addition, since $R_i$ is the shortest, it is an induced path, and $r_2$ is its only vertex with neighbors on $P_i$ and $r_{k-1}$ is its only vertex with neighbors on $Q_i$. Note that $r_2=r_{k-1}$ when $k=3$. Let $p_m$ and $p_{m'}$ ($m\le m'$) be the (possibly identical) two neighbors of $r_2$ such that all neighbors of $r_2$ on $P_i$ are contained in $P_i[p_m,p_{m'}]$. If $m'-m\ge2$, then \ref{ii} or \ref{iii} is violated since subpath $P_i[p_m,p_{m'}]$ of $P_i$ could be replaced by path $p_mr_2p_{m'}$. Therefore, $r_2$ has either only one neighbor or only two adjacent neighbors on $P_i$. We may say the same about neighbors of $r_{k-1}$ on $Q_i$. As a result, there are four types of $R_i$ depending on their number of neighbors on $P\cup Q$. By the pigeon-hole principle, one of these four types happens infinitely often.
If there are infinitely many $i$ such that $r^i_2$ has only one neighbor on $P$ and $r^i_{k-1}$ has only one neighbor on $Q$, then $P\cup Q\cup R_1\cup R_2\cup \cdots$ contains a member of $\Loneinf$, and the conclusion follows.
If there are infinitely many $i$ such that $r^i_2$ has two neighbors on $P$ and $r^i_{k-1}$ has only one neighbor on $Q$, then $P\cup Q\cup R_1\cup R_2\cup \cdots$ contains a member of $\Ltwoinf$, and the conclusion follows.
If there are infinitely many $i$ such that $r^i_2$ has only one neighbor on $P$ and $r^i_{k-1}$ has two neighbors on $Q$, then $P\cup Q\cup R_1\cup R_2\cup \cdots$ contains a member of $\Ltwoinf$, and the conclusion follows.
If there are infinitely many $i$ such that $r^i_2$ has two neighbors on $P$ and $r^i_{k-1}$ has two neighbors on $Q$, then $P\cup Q\cup R_1\cup R_2\cup \cdots$ contains a member of $\Lthreeinf$, and the conclusion follows.
\end{proof}

\section{Proving \cref{thm:infinite,thm:finite}}\label{sec:provingmainthm}
We can now prove \cref{thm:infinite} restated below.
 
\thminftwocon*
\begin{proof}
	Let $G$ be an infinite 2-connected graph.
	If $G$ has a vertex of infinite degree, then \cref{lem:infvert} implies that $G$ contains as an induced subgraph either $K_{\infty}$ or a member of $\TFinf\cup\TFinf^{\Delta}\cup\Theta_{\infty}$, and the conclusion follows.
	
	We may therefore assume that $G$ is locally-finite.
        \cref{thm:locfin} implies that $G$ contains as an induced subgraph either an infinite slim ladder or a member of $\Loneinf\cup\Ltwoinf\cup\Lthreeinf$, and the conclusion follows.
\end{proof}

  We now present an alternative proof of \cref{thm:finite} to that in
    \cite{unavoidableinducedsubgraphs} using \cref{thm:infinite}.
  To do this, we need the following:

  \begin{lemma}\label{lem:nobigcycle}
    Let $c$ be an integer exceeding two, and let~$G$ be a $2$-connected finite graph
      with an $n$-element vertex set~$V$, and with no induced cycle of length exceeding $c$.
    Then $V$ may be enumerated as $v_1$, $v_2$, $\dots$,~$v_n$ so that for each integer
      $i$ satisfying $c\le i\le n$, there is a $j$ such that $i-(c-3)\le j\le i$ and
      $G\left[\{v_1,v_2,\dots,v_j\}\right]$ is $2$-connected.
  \end{lemma}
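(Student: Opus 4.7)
The plan is to build the enumeration via an ear decomposition of $G$ starting from a shortest cycle, with each ear contributing at most $c-3$ new vertices so that the 2-connected ``checkpoints'' end up spaced at most $c-2$ apart; the window condition is then immediate.

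For the initial cycle, take $C_0$ to be a shortest cycle of $G$; any chord would yield a strictly shorter cycle, so $C_0$ is induced, and the hypothesis gives $|V(C_0)| \le c-1$. Enumerate $V(C_0)$ cyclically as $v_1, \dots, v_{j_0}$, so that $H_0 := G[\{v_1, \dots, v_{j_0}\}] = C_0$ is 2-connected with $j_0 \le c-1$. Inductively, once $H_k := G[\{v_1, \dots, v_{j_k}\}]$ is 2-connected and $H_k \ne G$, the 2-connectivity of $G$ together with Menger's theorem supplies an ear of $H_k$ (a path in $G$ with both endpoints in $V(H_k)$ and all internal vertices in $V(G)\setminus V(H_k)$) with at least one internal vertex; among all such ears, choose one $P = u_1 x_1 \cdots x_{\ell-1} u_2$ minimizing $\ell := |E(P)| \ge 2$. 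Append $x_1, \dots, x_{\ell-1}$ in that order to the enumeration and set $j_{k+1} := j_k + \ell - 1$.

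The heart of the argument is the claim that $\ell \le c-2$ and that $G[V(H_k) \cup V(P)]$ is 2-connected. If some internal $x_i$ had a neighbor $u_3 \in V(H_k)\setminus\{u_1,u_2\}$, then $u_1 x_1 \cdots x_i u_3$ and $u_3 x_i \cdots x_{\ell-1} u_2$ would be ears of lengths $i+1$ and $\ell-i+1$ each containing at least one internal vertex; minimality of $\ell$ then forces $i \ge \ell-1$ and $i \le 1$, impossible for $\ell \ge 3$. Completely analogous shortest-ear arguments exclude chords inside $P$ and prevent $x_i$ from being adjacent to $u_1$ unless $i=1$, or to $u_2$ unless $i = \ell-1$. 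Hence $G[V(H_k)\cup V(P)] = H_k \cup P$, which is 2-connected as an ear extension of $H_k$. Now let $Q$ be a shortest $u_1 u_2$-path in $H_k$; as a shortest path inside the induced subgraph $H_k$, $Q$ is induced in $G$, and combining this with the absence of chords from $P$ back to $V(H_k)\setminus\{u_1,u_2\}$ shows that $C := P \cup Q$ is an induced cycle of $G$. The hypothesis then gives $\ell + |E(Q)| = |E(C)| \le c-1$, and since $|E(Q)|\ge 1$ we conclude $\ell \le c-2$, whence $j_{k+1} - j_k \le c-3$.

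The construction terminates in finitely many steps with a complete enumeration $v_1, \dots, v_n$. For any $i$ with $c \le i \le n$, let $k$ be maximal with $j_k \le i$, well-defined since $j_0 \le c-1 < c \le i$. Either $j_k = i$ already, or $j_k < i < j_{k+1}$ and $i - j_k < j_{k+1} - j_k \le c-3$; either way $j_k \in [i-c, i]$ and $G[\{v_1,\dots,v_{j_k}\}]$ is 2-connected, as required. The hard part will be the chord analysis of the shortest ear just described, which must simultaneously yield the length bound $\ell \le c-2$ and the identity $G[V(H_k) \cup V(P)] = H_k \cup P$; once this is in hand, the inductive extension and the window-condition bookkeeping are routine.
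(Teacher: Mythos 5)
Your proposal is correct and takes essentially the same route as the paper: grow a $2$-connected induced subgraph by repeatedly attaching a minimum-length ear, use the no-long-induced-cycle hypothesis (via the induced cycle formed by the ear together with a path through the current subgraph) to bound each increment by fewer than $c$ vertices, and enumerate with checkpoints along the way — the paper merely phrases the minimality as choosing a shortest cycle through a new vertex rather than a shortest ear. The one loose end is the case $\ell=2$, where your chord exclusions (and hence the identity $G[V(H_k)\cup V(P)]=H_k\cup P$ and the induced-cycle bound) are not justified, but nothing breaks there: the increment is a single vertex, $G[V(H_k)\cup V(P)]$ still contains the spanning $2$-connected subgraph $H_k\cup P$, and the needed bound $\ell\le c-2$ holds because the case $c=3$ is vacuous.
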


  \begin{proof}
    Let $c$, $n$, $G$, and $V$ be as described in the theorem.
    We begin by using induction to construct a sequence of induced cycles
      $D_0$, $D_1$, \dots,~$D_t$ of $G$, as follows.
    Let $D_0$ be an arbitrary induced cycle of $G$.
    Such a cycle exists because $G$ is $2$-connected.
    Now assume that the cycles $D_0$, $D_1$, \dots,~$D_i$ have been defined so that
      $G_i=D_0\cup D_1\cup\cdots\cup D_i$ is a $2$-connected subgraph of $G$.
      Let $V_i=V[G_i]$.
    Since $G$ is $2$-connected, for every vertex $v$ not in $G_{i}$ there is a path in $G$
      having $v$ as an internal vertex, and meeting $G_{i}$ only at its endpoints.
    Moreover, since $G_{i}$ is connected, every such path may be completed to a cycle
      by appending a path contained in~$G[V_i]$.
    From among all possible cycles constructed in this way, over all vertices $v$ outside $G_i$, choose one of minimum length and name
      it~$D_{i+1}$.
    It follows that $D_{i+1}$ is induced in $G$, and so its length is at most~$c$.
    Now define $G_{i+1} = G_{i} \cup D_{i+1}$ and observe that $G_{i+1}$ is $2$-connected
      and adds to $G_{i}$ at most $c-2$ new vertices.
    If $V(G_{i+1}) = V(G)$, then the inductive process terminates;
      otherwise it continues.

    Now, to enumerate the vertices of $G$, proceed in steps, with the first step being listing
      the vertices of $D_0$, in arbitrary order, and with step $i$, for $i\ge 1$, consisting of
      appending to the list the vertices of $D_i$ not yet on the list.
    Note that each step adds at most $c-2$ vertices to the list, and at the completion of every
      step, the list produced thus far induces in $G$ a $2$-connected subgraph.
    The conclusion follows.
  \end{proof}

We say that a graph is \emph{$\{K_r, \Theta_r, \Lambda_r\}$-free} if it contains no member of $\{K_r\}\cup \Theta_r\cup \Lambda_r$ as an induced subgraph.
We can now present a proof of \cref{thm:finite} using \cref{thm:infinite}.
\thmfinite*

\begin{proof}
    We will prove \cref{thm:finite} by contradiction.
    Suppose that $f_{\ref{thm:finite}}(r)$ does not exist for some $r$.
    Then for every $n\ge 3$, there exists a 2-connected graph, $G_n$, of order at least $n$ that is $\{K_r,\Theta_r,\Lambda_r\}$-free.

    Note that an induced cycle of order at least $r+2$ is a member of the family $\Lambda_r$.
    Thus, we may apply \cref{lem:nobigcycle} with $c=r+1$ to each $G_n$.
    Consider an infinite sequence $v_1,v_2,\dots $.  We use this sequence to label the  vertices of each of the $G_n$. 
    Enumerate the vertices of each $G_n$ by $v_1$, $v_2$, \dots, $v_{|V(G_n)|}$ such that for each $i$ satisfying $r< i \le |V(G_n)|$, there exists a $j$ such that $i-(r-2)\le j\le i$ and $G_n[v_1, v_2, \dots, v_j]$ is 2-connected.

    We will inductively construct an infinite graph $G$ on the vertex set $u_1$, $u_2$, \dots\space such that for every positive integer $m$, the graph $G[u_1,u_2,\dots,u_m]$ is isomorphic, using the mapping $\pi:u_j\rightarrow v_j$ for all $j\in \N$, to infinitely many graphs of the form $G_i[v_1, v_2,\dots, v_m]$ for $i\ge m$. Then we show that $G$ is 2-connected in order to apply \cref{thm:infinite}.
    
    Let $\ell=1$, then let $I_{1}=\mathbb{N}\setminus \{1,2\}$ and let $G[u_1]\cong G_n[v_1]$ for $n\in I_1$.
    Inductively, suppose that $I_{\ell}$ has been defined (and is infinite) and that $G[u_1,u_2,\dots,u_{\ell}]$ has been defined so that it is isomorphic under $\pi$ to $G_i[v_1,v_2,\dots, v_{\ell}]$ for all $i\in I_{\ell}$.
    Since $I_{\ell}$ is infinite and $|G_i|>\ell$ for all $i>\ell$, it follows that $I_{\ell}$ contains infinitely many $i$ such that $|G_i|>\ell$.
    So, there is an infinite $I_{\ell+1}\subseteq I_{\ell}$ such that $G_i[v_1,v_2,\dots, v_{\ell+1}]= G_j[v_1,v_2,\dots, v_{\ell+1}]$ for all $i$ and $j$ in $I_{\ell+1}$.
    Let $G[u_1, u_2, \dots,u_{\ell+1}]$ be congruent to $ G_i[v_1,v_2,\dots, v_{\ell+1}]$ under $\pi$ for some $i\in I_{\ell+1}$.
    Note that it does not matter which $i\in I_{\ell+1}$ by selection of $I_{\ell+1}$.
    Further, $G[u_1, u_2,\dots, u_{\ell+1}]$ is $\{K_r,\Theta_r,\Lambda_r\}$-free because $G_i[v_1,v_2,\dots, v_{\ell+1}]$ is $\{K_r,\Theta_r,\Lambda_r\}$-free.

    Note that $G$ is $\{K_r,\Theta_r,\Lambda_r\}$-free by construction.
    To apply \cref{thm:infinite}, it remains to show that $G$ is 2-connected.
    
    At step $t+r$ for a natural number $t$, we have that $I_{t+r}\subseteq \{j~:~G_j[v_1,v_2,\dots, v_{t+r}]\cong G[u_1,u_2,\dots, u_{t+r}]\}$.
    So there exists a graph $G_s$ such that $G_s[v_1,v_2,\dots, v_{t+r}]$  is isomorphic to $G[u_1,u_2,\dots, u_{t+r}]$.
    By construction, $G_s[v_1,v_2,\dots ,v_{t+r'}]$ for some $2\le r' \le r$ is 2-connected.
    Since we can do this for any integer $t$ exceeding 2, it follows that $G$ is 2-connected.
    
    Since $G$ is an infinite 2-connected graph, \cref{thm:infinite} implies that $G$ contains one of the following as an induced subgraph: $K_{\infty}$, an infinite slim ladder, and a member of one of the following families: $\Theta_{\infty}$, $\TFinf$, $\TFinf^{\Delta}$, $\Loneinf$, $\Ltwoinf$, and $\Lthreeinf$.
    
    If $G$ contains an induced $K_{\infty}$, then $G$ contains an induced $K_r$; a contradiction.
    If $G$ contains an infinite induced slim ladder, then $G$ contains an induced member of the family $\Lambda_r$; a contradiction.
     If $G$ contains an induced member of the family $\Theta_{\infty}$, then $G$ contains an induced member of the family $\Theta_r$; a contradiction.
    If $G$ contains an induced member of $\TFinf^\Delta\cup\Loneinf \cup \Ltwoinf\cup\Lthreeinf$, then $G$ contains an induced cycle of length at least $r+2$.  Thus $G$ contains an induced member of $\Lambda_r$; a contradiction.
    If $G$ contains an induced member $G'$ of $\TFinf$, then let $C$ be a cycle of $G'$ of length at least $r+2$.  Then the subgraph of $G$ of induced by $V(C)$ is a member of the family $\Lambda_r$.

    Hence, for every positive integer $r$ exceeding two, there is an integer $f_{\ref{thm:finite}}(r)$ such that every 2-connected graph of order at least $f_{\ref{thm:finite}}(r)$ contains $K_r$ or a member of $\Theta_r\cup \Lambda_r$ as an induced subgraph, as required.
\end{proof}

\section*{Acknowledgements}
	The authors express gratitude to the referee for carefully reading the paper and for suggesting an alternative proof of \cref{thm:infinite}, which significantly simplifies our original proof.

\section*{Declaration of Competing Interests}
The authors declare that they have no known competing financial interests or personal relationships that could have appeared to influence the work reported in this paper.

\bibliographystyle{plain}
\bibliography{UnavoidableBib}
\vspace{-5mm}

\end{document}